\numberwithin{equation}{section}
\theoremstyle{plain}
\newtheorem{theorem}[equation]{Theorem}
\newtheorem*{thm}{Theorem}
\newtheorem{proposition}[equation]{Proposition}
\newtheorem{lemma}[equation]{Lemma}
\newtheorem{corollary}[equation]{Corollary}
\theoremstyle{definition}
\newtheorem{remark}[equation]{Remark}
\newtheorem*{ques}{Question}
\newcommand\wt{\widetilde}
\newcommand\ov{\overline}
\newcommand\surj{\twoheadrightarrow}
\long\def\symbolfootnote[#1]#2{\begingroup%
\def\thefootnote{\fnsymbol{footnote}}\footnote[#1]{#2}\endgroup}
\begin{document}
\title{{Totalization of simplicial homotopy types}}
\date{}
\author{C. Ogle, A. Salch}
\maketitle
\begin{abstract} We identify the obstructions for the functoriality and the uniqueness of the totalization functor, (partially) defined on the category of simplicial objects in the homotopy category of a stable model category, and we use a result from the cyclic homology of group algebras to show they can be non-zero.
\end{abstract}

\symbolfootnote[0]{2000 {\it Mathematics Subject Classification}. Primary 18G55; Secondary 18G30, 18G35, 18G40, 55U10, 55U15, 55U35, 55U40.}
\symbolfootnote[0]{{\it Key words and phrases}. Homotopy category, closed model category, simplicial object.}
\vskip.5in

\section{Introduction/Statement of results}
\vskip.4in

Let $\cal C$ denote a stable model category (our motivating example is the category of bounded-below chain complexes over $\mathbb C$). Then the associated homotopy category $Ho({\cal C})$ is triangulated in a natural way. Let $S_{\bullet}{\cal C}$ denote the category of simplicial objects over $\cal C$; the objects in this category are functors $F:\Delta^{op}\to {\cal C}$, with morphisms given by natural transformations of such functors. Given such a simplicial object $C_{\bullet} = \{[n]\mapsto C_{n}\}_{n\ge 0}$, one can {\em totalize} it. (For our decision to refer to this construction as ``totalization'' and not its more common name, ``geometric
realization,'' see Remark~\ref{remark on terminology}.) We then have a totalization functor 
$Tot:S_{\bullet}{\cal C}\to {\cal C}$. 
Using this functor, one has the following fundamental definition:
\begin{itemize}
\item A simplicial morphism $\phi_{\bullet}:C_{\bullet}\to D_{\bullet}$ is a weak equivalence in $S_{\bullet}{\cal C}$ iff $Tot(\phi_{\bullet}):Tot(C_{\bullet})\to Tot(D_{\bullet})$ is a weak equivalence in $\cal C$.
\end{itemize}

This definition of weak equivalence yields a closed model structure on $S_{\bullet}{\cal C}$ compatible with that on $\cal C$ via the functor $Tot$\footnote{This is in distinction to [CL], where Reedy shows that the category of simplicial objects over a model category admits a model structure when using the more restrictive notion of degreewise weak equivalence}. Now one can also consider the simplicial category $S_{\bullet}Ho({\cal C})$ of simplicial objects over the homotopy category, and two natural questions to ask regarding totalization are:
\vskip.2in

{\bf\underbar{Question 1}} (Existence) 
\vskip.1in
a) (weak form) Let $\ov{C}_{\bullet}$ be an object in $S_{\bullet}Ho({\cal C})$. Does $Tot(\ov{C}_{\bullet})$ exist? 
 (If the answer is ``yes,'' we will say that $\ov{C}_{\bullet}$ is {\em totalizable.})
\vskip.05in
b) (strong form)  Let $\ov{C}_{\bullet}$ be an object in $S_{\bullet}Ho({\cal C})$. Does there exist an object $C_{\bullet}$ of $S_{\bullet}{\cal C}$ with $\ov{C}_{\bullet} = [C_{\bullet}]$?
\vskip.2in

This issue has been partially addressed by B\"okstedt and Neeman in [BN]. Precisely, in [\S 3, BN], the authors show that the natural construction of $Tot(\ov{C}_{\bullet})$ as a homotopy colimit in the (triangulated) homotopy category $Ho({\cal C})$ can be realized iff a sequence of first-order Toda brackets vanishes (cf. [K]). In fact their argument shows a bit more, so we recall their setup. Let $\ov{C}_*$ denote the chain complex object in $Ho({\cal C})$ associated to $\ov{C}_{\bullet}$ by taking alternating sums of face sums (this constructions makes sense since triangulated categories are additive, so we can add and subtract the face maps from each other). For each $n\ge 0$, let $T_n$ denote the (hypothetical) total object of the \lq\lq n-skeleton\rq\rq\ $\ov{C}_n\to \ov{C}_{n-1}\to\dots \to \ov{C}_0$. Obviously $T_0 = \ov{C}_0$, and $T_1$ is the mapping cone of $\ov{C}_1\to \ov{C}_0$, described by the triangle
\[
\ov{C}_1\to \ov{C}_0\to T_1\to \Sigma \ov{C}_1.
\]
The two nulhomotopies of the composition $\ov{C}_2\to\ov{C}_1\to \ov{C}_0\to T_1$ allow one to define a map $\Sigma \ov{C}_2\to T_1$. We let $T_2$ be the mapping cone of this map. One then considers the composite $\phi_3:\Sigma \ov{C}_3\to \Sigma \ov{C}_2\to T_1$, which is the first Toda bracket associated to the homotopy chain complex $\ov{C}_*$. If $\phi_3=0$, then we get a map $\Sigma^2 \ov{C}_3 \to T_2$, and we let $T_3$ be its mapping cone. Then one considers the composite
$\phi_4:\Sigma^2\ov{C}_4\to \Sigma^2\ov{C}_3\to T_2$. If $\phi_4=0$, then we get a map $\Sigma^3 \ov{C}_4 \rightarrow T_3$, and we let $T_4$ be its mapping cone.
At this point the inductive construction should be clear. The argument of [BN] leads to the following, which essentially answers part a) of Question 1.

\begin{thm} \label{bokstedt-neeman thm} [{[BN]}] Let $\ov{C}_*$, $T_i, i = 0,1$ be as above. Suppose that for $k<n$, $T_k$ exists and that there is a triangle
\[
T_{k-1}\to T_k\to \Sigma^k \ov{C}_k\to \Sigma T_{k-1}
\]
Then the $\text{(n-2)}^{\text{nd}}$ Toda bracket associated to the complex $\ov{C}_*$
\[
\phi_n:\Sigma^{n-2} \ov{C}_n\to T_{n-1}
\]
is defined, and $T_n$ can be constructed from this data iff $\phi_n=0$. The totalization $T_{\infty}$ exists iff $T_n$ can be constructed in this fashion for each $n\ge 0$, in which case one has $T_{\infty} := \underset{\xrightarrow{\hspace*{1cm}}}{hocolim}\ T_n$.  
\end{thm}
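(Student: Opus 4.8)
The plan is to adapt the obstruction-theoretic argument of B\"okstedt--Neeman, assembling the $T_k$ into a Postnikov-type tower in the triangulated category $Ho(\mathcal C)$ and identifying the obstruction at each stage with a first-order Toda bracket. First I would make the inductive data explicit: for $k<n$ the hypothesis gives $T_k$ and a triangle $T_{k-1}\to T_k\to\Sigma^k\ov C_k\xrightarrow{\partial_k}\Sigma T_{k-1}$, and rotating it exhibits $T_k$ as $\mathrm{cofib}(\psi_k)$ for a map $\psi_k\colon\Sigma^{k-1}\ov C_k\to T_{k-1}$ with $\Sigma\psi_k=\pm\partial_k$; equivalently $\Sigma^{-1}T_k=\mathrm{fib}(\psi_k)$ with a structure map $e_k\colon\Sigma^{-1}T_k\to\Sigma^{k-1}\ov C_k$ obtained from $T_k$'s triangle by rotation. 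I would then \emph{define} $\phi_n$ as the composite $\Sigma^{n-2}\ov C_n\xrightarrow{\Sigma^{n-2}d_n}\Sigma^{n-2}\ov C_{n-1}\xrightarrow{\psi_{n-1}}T_{n-2}$ (equivalently, a desuspension of $\partial_{n-1}\circ\Sigma^{n-1}d_n$). This is a well-defined morphism precisely because the hypotheses hold --- $d_{n-1}d_n=0$ in $\ov C_*$, and $T_{n-1}$ exists with its triangle --- it depends only on the lower data $(T_k,\partial_k)_{k<n}$, and as those choices are varied it ranges over the coset which is the honest $(n-2)$nd Toda bracket of $\ov C_*$ in the sense of [K]. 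Recording this identification is the first step.

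For the ``if'' direction: if $\phi_n=\psi_{n-1}\circ\Sigma^{n-2}d_n=0$, then $\Sigma^{n-2}d_n$ factors through $e_{n-1}$, say $\Sigma^{n-2}d_n=e_{n-1}\circ g$ with $g\colon\Sigma^{n-2}\ov C_n\to\Sigma^{-1}T_{n-1}$; set $\psi_n:=\Sigma g\colon\Sigma^{n-1}\ov C_n\to T_{n-1}$ and $T_n:=\mathrm{cofib}(\psi_n)$. Rotating the defining triangle of $\psi_n$ puts $T_n$ in the required form, and the identity $e_{n-1}\circ g=\Sigma^{n-2}d_n$ translates, after one suspension, into the assertion that the connecting map $\Sigma^n\ov C_n\to\Sigma T_{n-1}$ of $T_n$'s triangle, pushed along $T_{n-1}$'s triangle to $\Sigma^n\ov C_{n-1}$, recovers $\pm\Sigma^n d_n$; that is, $T_n$ is genuinely the totalization of the $n$-skeleton compatibly with $d_n$. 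For the ``only if'' direction, suppose $T_n$ is built from this data, i.e.\ sits in a triangle $T_{n-1}\to T_n\to\Sigma^n\ov C_n\xrightarrow{c}\Sigma T_{n-1}$ whose connecting map $c$, pushed along $T_{n-1}$'s triangle to $\Sigma^n\ov C_{n-1}$, equals $\pm\Sigma^n d_n$. Precomposing the relation ``consecutive maps of $T_{n-1}$'s triangle compose to $0$'' with $\Sigma^{-1}c$ and using that compatibility yields $\partial_{n-1}\circ\Sigma^{n-1}d_n=0$, i.e.\ $\Sigma\phi_n=0$, hence $\phi_n=0$. The octahedral axiom enters here to see that any two such $T_n$ are isomorphic over the tower, so nothing depends on choices beyond those already made.

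For $T_\infty$: if every $T_n$ can be built, the maps $T_{n-1}\to T_n$ coming from the triangles assemble into a tower $T_0\to T_1\to T_2\to\cdots$, and I would put $T_\infty:=\mathrm{hocolim}_n\,T_n$, which exists since $\mathcal C$ is a stable model category. That this deserves the name totalization follows by comparing skeletal filtrations: for a simplicial object $C_\bullet$ of $\mathcal C$ lifting $\ov C_\bullet$ (when one exists), the skeletal inclusion $Tot(\mathrm{sk}_{n-1}C_\bullet)\to Tot(\mathrm{sk}_n C_\bullet)$ has cofiber $\Sigma^n\ov C_n$ with connecting map $\Sigma^n d_n$, so $[Tot(\mathrm{sk}_n C_\bullet)]$ realizes the very recursion defining $T_n$ and is therefore isomorphic to it, while $Tot(C_\bullet)=\mathrm{hocolim}_n\,Tot(\mathrm{sk}_n C_\bullet)$; conversely, a totalization, when it exists, carries a skeletal filtration producing objects realizing the recursion, which forces each $T_n$ to be constructible.

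The point I expect to be the main obstacle is the ``only if'' half of the iff: matching an \emph{arbitrarily} constructed $T_n$ against the canonical cofiber requires a careful application of the octahedral axiom to the triangles interrelating $T_{n-2}$, $T_{n-1}$, $T_n$ and the objects $\Sigma^{n-1}\ov C_{n-1}$, $\Sigma^n\ov C_n$, and one must be scrupulous about the meaning of ``$T_n$ constructed from this data'': without the clause that the new connecting map be compatible with $d_n$, the assertion is simply false, which is exactly why the inductive hypothesis of the theorem carries the explicit triangle for $T_k$ rather than merely the bare existence of $T_k$.
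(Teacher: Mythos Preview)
The paper does not give its own proof of this theorem: it is quoted from [BN], and the only argument the paper supplies is the informal inductive sketch in the paragraph immediately preceding the statement (build $T_k$ as the cone on $\psi_k:\Sigma^{k-1}\ov C_k\to T_{k-1}$, define $\phi_n$ as the composite $\Sigma^{n-2}\ov C_n\xrightarrow{\Sigma^{n-2}d_n}\Sigma^{n-2}\ov C_{n-1}\xrightarrow{\psi_{n-1}}T_{n-2}$, lift when it vanishes). Your proposal is a faithful expansion of exactly that sketch into a complete argument, and agrees with the B\"okstedt--Neeman treatment; note in passing that your target $T_{n-2}$ for $\phi_n$ matches the paper's sketch rather than the displayed $T_{n-1}$ in the theorem statement, which appears to be a typo.
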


Thus the total complex $T_{\infty}$ of $\ov{C}_*$, if it exists, is naturally equipped with a skeletal filtration $\{T_n\}$, for which the associated graded object satisfies $Gr_n(T_{\infty})/Gr_{n-1}(T_{\infty}) = \Sigma^n \ov{C}_n$ for each $n$.  Note that this construction does not resolve the existence issue raised by part b), for which the obstructions seem to be a bit more subtle. Nevertheless, following this train of thought and motivated by the question posed by B\"okstedt and Neeman on [p. 219, BN], we have
\vskip.2in

{\bf\underbar{Question 2a}} (Uniqueness) Let $C_{\bullet}$, $D_{\bullet}$ be two objects of $S_{\bullet} \cal C$, with the corresponding objects $\{[n]\mapsto [C_{n}]\}_{n\ge 0}$ resp\@. $\{[n]\mapsto [D_{n}]\}_{n\ge 0}$ in $S_{\bullet}Ho({\cal C})$ denoted by $[C_{\bullet}]$ resp\@. $[D_{\bullet}]$. If $\ov{f}_{\bullet}:[C_{\bullet}]\to [D_{\bullet}]$ is a simplicial map which is the identity in each degree, does $\ov{f}_{\bullet}$ extend to a filtration-preserving equivalence $g:Tot(C_{\bullet})\to Tot(D_{\bullet})$?
\vskip.2in

More generally, one can ask when a map of totalizable simplicial homotopy types induces a map of totalizations:

{\bf\underbar{Question 2b}} (Functoriality) Let $C_{\bullet}$, $D_{\bullet}$ be two objects of $S_{\bullet} Ho( \cal C)$ whose totalizations both exist. If $f_{\bullet}:C_{\bullet}\to D_{\bullet}$ is a simplicial map, does $f_{\bullet}$ extend to a filtration-preserving morphism $g_*:Tot(C_{\bullet})\to Tot(D_{\bullet})$?
\vskip.2in

As observed in [\S 3, BN], it is relatively easy to construct simplicial objects in $Ho({\cal C})$ for which the first possibly non-trivial Toda bracket is in fact non-trivial, from which one concludes that the totalization of $\ov{C}_*$ in general does not exist. There is, then, the secondary issue of the uniqueness of the totalization, framed as in Question 2a, and of functoriality, as in Question 2b. Our first main result is the identification of a series of  obstructions associated to a morphism of simplicial objects in $Ho({\cal C})$ whose vanishing provides necessary and sufficient conditions for the existence of a filtration-preserving morphism of totalizations to exist, as specified by these two questions. These obstructions may be thought of as the first and higher-order Toda brackets associated to a map of totalizable simplicial objects in the homotopy category $Ho (\cal C)$ (as defined below). In section 3, we show that a well-known equivalence used to compute the cyclic homology of group algebras, simplicialized, provides an explicit example of the non-triviality of these Toda brackets, and thus an explicit case of when the totalization, even when it exists, is not unique. Precisely, we show, for $\mathcal{C}$ the category of bounded-below chain complexes over $\mathbb{C}$:

\begin{theorem} There are simplicial chain complexes $C_{*,\bullet}$, $D_{*,\bullet}$ in $S_*{\cal C}$ such that
\begin{itemize}
\item[P1] There exists a morphism of graded chain complexes $\{\phi_n:C_{*,n}\to D_{*,n}\}_{n\ge 0}$ with $\phi_n$ a weak equivalence (i.e., quasi-isomorphism) for each $n$,
\item[P2] for each morphism $\alpha\in Hom_{\Delta}([m],[n])$, there is a canonical chain homotopy
\[
\phi_m\circ C_{*,\bullet}(\alpha)\simeq D_{*,\bullet}(\alpha)\circ \phi_n : C_{*,n}\to D_{*,m},
\]
\item[P3] and $H_*(Tot_* C_{*,\bullet})\ne H_*(Tot_* D_{*,\bullet})$.
\end{itemize}
\end{theorem}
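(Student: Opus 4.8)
The plan is to take $C_{*,\bullet}$ to be (a levelwise flat resolution of) the cyclic bar construction of a group algebra $\mathbb C[G]$, so that $\mathrm{Tot}_*C_{*,\bullet}$ is the Hochschild complex and $H_*(\mathrm{Tot}_*C_{*,\bullet})=HH_*(\mathbb C[G])$, and to build $D_{*,\bullet}$ by ``simplicializing'' the classical equivalence of Burghelea that computes $HH_*$ (and $HC_*$) of group algebras from the homology of centralizers. The crucial feature of that equivalence is that, once transported to the level of simplicial chain complexes, it is a \emph{levelwise} quasi-isomorphism $\phi_n\colon C_{*,n}\to D_{*,n}$ whose squares with the simplicial operators commute only up to a canonical but \emph{non-coherent} chain homotopy: it exhibits $[\phi_\bullet]$ as an isomorphism $[C_{*,\bullet}]\xrightarrow{\ \sim\ }[D_{*,\bullet}]$ in $S_\bullet Ho(\mathcal C)$ which does not lift to a map in $S_\bullet\mathcal C$. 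By our first main result (the identification of the higher Toda brackets of a map of totalizable simplicial homotopy types), the nonvanishing of these brackets for $[\phi_\bullet]$ is exactly the obstruction to $\mathrm{Tot}_*C_{*,\bullet}$ and $\mathrm{Tot}_*D_{*,\bullet}$ being quasi-isomorphic, and the theorem asserts that for a suitable $G$ they do not vanish.

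In detail, I would first choose $G$ to be a countable group containing an element of infinite order, for instance a non-abelian free group, so that $HH_*(\mathbb C[G])\cong\bigoplus_{[g]}H_*(BZ_g;\mathbb C)$ has classes in positive homological degree; this is the source of the discrepancy needed for P3. Next I would set $C_{*,\bullet}=\bigl([n]\mapsto\mathbb C[G]^{\otimes(n+1)}\bigr)$, replaced degreewise by a functorial flat resolution so that $C_{*,\bullet}\in S_\bullet\mathcal C$ and $\mathrm{Tot}_*C_{*,\bullet}$ is the Hochschild complex. Then, degreewise in $[n]$, I would apply the conjugacy-class decomposition together with the chosen chain-level incarnation of Shapiro's lemma and the centralizer comparison to produce $D_{*,n}$ and the quasi-isomorphisms $\phi_n$, which is P1; the canonical chain homotopies $\phi_m\circ C_{*,\bullet}(\alpha)\simeq D_{*,\bullet}(\alpha)\circ\phi_n$ for every $\alpha\in\mathrm{Hom}_\Delta([m],[n])$ are obtained by concatenating the standard prism and averaging homotopies that make those comparisons natural up to homotopy, which is P2. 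The delicate point is that $D_{*,\bullet}$ is assembled as a strictification, in $S_\bullet\mathcal C$, of the object of $S_\bullet Ho(\mathcal C)$ coming from the Burghelea decomposition, using only this first-order coherence data; its strict simplicial structure therefore no longer records how the centralizer pieces are glued along the cyclic structure of the cyclic nerve, and one computes $H_*(\mathrm{Tot}_*D_{*,\bullet})$ to be the resulting ``naive'' answer, which for the chosen $G$ differs from $\bigoplus_{[g]}H_*(BZ_g;\mathbb C)$ already in low degrees. This gives P3.

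The step I expect to be the main obstacle is the construction and computation for $D_{*,\bullet}$: producing it as an honest functor $\Delta^{op}\to\mathcal C$ rather than a homotopy-coherent one, checking that the homotopies forced by P2 are precisely those supplied by the standard proofs (so that the word ``canonical'' is justified), and --- hardest --- computing $\mathrm{Tot}_*D_{*,\bullet}$ accurately enough to certify $H_*(\mathrm{Tot}_*D_{*,\bullet})\ne H_*(\mathrm{Tot}_*C_{*,\bullet})$ rather than merely suspecting it. Equivalently, one must show that $[C_{*,\bullet}]$ and $[D_{*,\bullet}]$, though isomorphic in $S_\bullet Ho(\mathcal C)$, have genuinely distinct images in the homotopy category of $S_\bullet\mathcal C$; it is here that the nonvanishing of the Toda brackets --- the failure of the well-known equivalence computing the (co)cyclic homology of $\mathbb C[G]$ to be invariant under totalization --- must be pinned down, and it is here that the specific input from the cyclic homology of group algebras does the work.
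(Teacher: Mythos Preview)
Your proposal has a structural gap: you have placed the simplicial direction in the wrong place. If $C_{*,\bullet}=\bigl([n]\mapsto \mathbb C[G]^{\otimes(n+1)}\bigr)$, then each $C_{*,n}$ is a chain complex concentrated in degree~$0$ (these are $\mathbb C$-vector spaces, already flat). For $\phi_n$ to be a quasi-isomorphism, $D_{*,n}$ must have the same $H_0$ and no higher homology. But the Burghelea comparison $BC_y\hookrightarrow (C_y\backslash G)\times_G EG\cong N^{cy}(G)_{\langle x\rangle}$ is a weak equivalence of \emph{simplicial sets}, not a levelwise bijection; in a fixed simplicial degree~$n$ it is a proper inclusion of sets, so it cannot give a quasi-isomorphism between complexes concentrated in degree~$0$. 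There is no ``prism'' or ``averaging'' homotopy that repairs this: the non-naturality in Burghelea's identification comes from the choice of a representative $y\in S_{\langle x\rangle}$, and that choice simply does not interact with the cyclic-bar simplicial operators in the way you suggest. Consequently your $D_{*,\bullet}$ is never actually constructed, and the ``strictification using only first-order coherence data'' is a wish rather than a definition.

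The paper's construction runs the other way. One takes a free \emph{simplicial group} $\Gamma_\bullet$ (specifically a model for $\Omega S^m$), and lets the simplicial direction~$\bullet$ be the simplicial-group direction; the chain direction~$*$ is then the cyclic-complex direction. For each fixed~$n$ one has the genuine quasi-isomorphism $\widetilde{CC}_*(\mathbb C[\Gamma_n])\to CC_*(\mathbb C[\Gamma_n])$ furnished by Burghelea's theorem applied to the free group $\Gamma_n$; this is P1. The homotopies in P2 are the canonical homotopies coming from the fact that conjugation acts trivially up to homotopy on $N^{cy}(\Gamma_m)$; crucially, the freeness of $\Gamma_n$ makes the centralizers infinite cyclic, hence abelian, which is what allows a \emph{canonical} model $C_{\langle x\rangle}$ and makes $\widetilde{CC}_*$ an honest functor on free groups. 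Property P3 is then obtained by an explicit spectral-sequence computation: one compares $H_m$ of the two totalizations and uses that $HC_m(\mathbb C[\Omega K(\mathbb Z,m)])=0$ (via the Connes--Gysin sequence) while the $\widetilde{CC}$ side retains a copy of $\mathbb C$ in degree~$m$. None of this is available in your setup because the degreewise objects $C_{*,n}$ carry no homotopical content for the Burghelea map to act on.
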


We would like to thank the referee for bringing [AB] to our attention, which considers issues related to the ones discussed here.
\vskip.4in

\section{Maps of simplicial homotopy types and higher Toda brackets}
\vskip.3in

\begin{remark}\label{remark on terminology}
The terminologically fastidious reader might insist that one {\em totalizes} a {\em cosimplicial} object, and one
{\em geometrically realizes} a {\em simplicial} object. To that way of thinking, we should be writing ``geometric realization'' rather than ``totalization'' throughout
this paper, since we are considering simplicial objects and not cosimplicial objects. 
The reason we have chosen to stick with the term ``totalization'' is that the main example in the last section of this paper, due to the first author,
really involves a totalization in the most classical sense: a totalization of a double complex. A suitably modern statement of the Eilenberg-Zilber theorem
is that there is a quasi-isomorphism between, on the one hand, the classical totalization of the alternating sum double complex
of any simplicial chain complex of abelian groups; and on the other hand, the geometric realization of that same simplicial chain complex of abelian groups.
So there is good precedent for our usage of the term ``totalization'' and it is motivated by our main example.
\end{remark}

\vskip 0.1in

Now we come around to the main question of this paper: suppose $C_{\bullet},D_{\bullet}$ are simplicial objects in the homotopy category $Ho ( \mathcal{C})$ of a stable model category $\mathcal{C}$.
Suppose further that $C_{\bullet},D_{\bullet}$ are both {\em totalizable,} that is, the totalizations of $C_{\bullet}$ and $D_{\bullet}$ in the sense of
Thm.~\ref{bokstedt-neeman thm} both exist. Finally, suppose we have a map $f_{\bullet}: C_{\bullet}\rightarrow D_{\bullet}$ of simplicial objects in
$Ho (\mathcal{C})$. We sometimes call $f_{\bullet}$ a {\em map of simplicial homotopy types.} 
Does $f_{\bullet}$ induce a map of totalizations $Tot  C_{\bullet}\rightarrow Tot  D_{\bullet}$?
\vskip.1in

To $C_{\bullet}$ we can associate an alternating sum chain complex object, for which we will write $C_*$. Let ${\cal F}_*C_{*}$ denote the skeletal filtration of $C_{*}$; that is, ${\cal F}_nC_{*} = \{C_{k}\}_{0\le k\le n}$, with ${\cal T}_nC_{*} := Tot\left({\cal F}_nC_{*}\right)$. Finally, for $1\le l\le n, n\ge 0$, let $Gr^l_n{\cal F}C_{*} = \left({\cal F}_nC_{*}\right)/\left({\cal F}_{n-l}C_{*}\right)$; similarly for $D_{*}$. The object $Tot(C_{*})$ is filtered by $\{ Tot ({\cal F}_n C_*) \}_{n\ge 0}$, and for each $n$
\[
Gr^1_n{\cal F}C_{*} = \left(Tot ({\cal F}_n C_*)\right)/\left(Tot ({\cal F}_{n-1} C_*)\right) = \Sigma^n C_{n} .
\]
By ``filtration'' here we mean we have a natural sequence of maps 
\[ 0 \simeq Tot\left({\cal F}_{-1}C_{*}\right) \to Tot\left({\cal F}_0C_{*}\right) \to Tot\left({\cal F}_1C_{*}\right) \to Tot\left({\cal F}_2C_{*}\right) \to \dots \]
whose homotopy colimit is $Tot(C_*) = Tot(C_{\bullet})$. 
(We are writing $Tot$ for what is really a geometric realization, and this sequence of maps is sometimes called the
{\em geometric realization tower,} a name which is more convincing if you draw the sequence vertically. We draw it this way in 
diagram~\ref{infinite geom realization tower for ss}, below.)

Questions 2a and 2b of the previous section may be reformulated as:

\begin{ques} When does there exist a filtration-preserving homomorphism of totalizations $g_*:Tot(C_{*})\to Tot(D_{*})$ with $g_n\simeq f_n:Gr^1_n{\cal F}C_{*}\to Gr^1_n{\cal F}D_{*}$ for each $n$?
\end{ques}

As we shall see, there is a naturally defined hierarchy of obstructions associated to the existence of such a map, and  that even the first-order obstructions are in general non-zero. To describe them, first note that by Dold-Kan, the homotopy commutativity of the above diagram is equivalent to the statement that
\begin{equation}\label{diag:diag4.3.2}
\xymatrix{
\ar@{}[dr]| *+[F-:<200pt>]{\simeq}
C_{n}\ar[d]^{d^{C}_{n}}\ar[r]^{f_n} &
D_{n}\ar[d]^{d^{D}_{n}}\\
C_{n-1}\ar[r]^{f_{n-1}} &
D_{n-1}
}
\end{equation}
commutes in the homotopy category $Ho (\mathcal{C})$ for each $n\ge 1$, with the vertical differentials given as the alternating sum of the face maps from dim.\ $n$ to dim.\ $n-1$. The first step to constructing $g_{\bullet}$ (or, equivalently, $g_*$) is provided by

\begin{proposition} \label{map on gr^2} For each $n\ge 1$ there exists a filtration-preserving map
\[
Tot\left(Gr^2_n{\cal F}C_{*}\right)\overset{h}\longrightarrow Tot\left(Gr^2_n{\cal F}D_{*}\right)
\]
which on the subquotients $Gr^1_m{\cal F}C_{*}$ agree with $f_m$  ($m = n,n-1$).
If $f_m$ is a weak equivalence for every $m$, then so is $h$.
\end{proposition}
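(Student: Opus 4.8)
The plan is to recognize $Tot\left(Gr^2_n{\cal F}C_*\right)$ as a mapping cone and then invoke the morphism axiom of the triangulated category $Ho({\cal C})$ to produce $h$ out of the homotopy-commutative square~\eqref{diag:diag4.3.2}. First I would unwind the definitions: $Gr^2_n{\cal F}C_* = \left({\cal F}_nC_*\right)/\left({\cal F}_{n-2}C_*\right)$ is the two-term chain complex object concentrated in degrees $n$ and $n-1$ with single nonzero differential $d^C_n:C_n\to C_{n-1}$, so by the B\"okstedt--Neeman totalization construction recalled above (for a two-term complex no Toda brackets intervene) $Tot\left(Gr^2_n{\cal F}C_*\right)$ is the mapping cone of $\Sigma^{n-1}d^C_n$, and thus sits in a distinguished triangle
\[
\Sigma^{n-1}C_n \xrightarrow{\ \Sigma^{n-1}d^C_n\ } \Sigma^{n-1}C_{n-1}\longrightarrow Tot\left(Gr^2_n{\cal F}C_*\right)\longrightarrow \Sigma^n C_n,
\]
in which the sub-object $\Sigma^{n-1}C_{n-1} = Tot\left(Gr^1_{n-1}{\cal F}C_*\right)$ and the quotient $\Sigma^n C_n = Tot\left(Gr^1_n{\cal F}C_*\right)$ are exactly the two pieces of the two-step filtration referred to in the statement; the same discussion applies verbatim to $D$.

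Next, by Dold--Kan the square~\eqref{diag:diag4.3.2} commutes in $Ho({\cal C})$, so applying the autoequivalence $\Sigma^{n-1}$ gives a commutative square with rows $\Sigma^{n-1}d^C_n$, $\Sigma^{n-1}d^D_n$ and columns $\Sigma^{n-1}f_n$, $\Sigma^{n-1}f_{n-1}$. Axiom TR3 for $Ho({\cal C})$ extends this to a morphism of distinguished triangles
\[
\xymatrix{
\Sigma^{n-1}C_n \ar[r]^-{\Sigma^{n-1}d^C_n} \ar[d]_{\Sigma^{n-1}f_n} & \Sigma^{n-1}C_{n-1}\ar[r]\ar[d]^{\Sigma^{n-1}f_{n-1}} & Tot\left(Gr^2_n{\cal F}C_*\right)\ar[r]\ar@{-->}[d]^{h} & \Sigma^n C_n\ar[d]^{\Sigma^n f_n}\\
\Sigma^{n-1}D_n \ar[r]^-{\Sigma^{n-1}d^D_n} & \Sigma^{n-1}D_{n-1}\ar[r] & Tot\left(Gr^2_n{\cal F}D_*\right)\ar[r] & \Sigma^n D_n
}
\]
whose third vertical arrow is the map $h$ we want. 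Commutativity of the two middle squares says precisely that $h$ carries $\Sigma^{n-1}C_{n-1}$ into $\Sigma^{n-1}D_{n-1}$ via $\Sigma^{n-1}f_{n-1}$ and induces $\Sigma^n f_n$ on the quotient $\Sigma^n C_n$; i.e.\ $h$ is filtration-preserving and agrees with $f_m$ on the subquotients $Gr^1_m{\cal F}C_*$ for $m = n,n-1$.

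Finally, if each $f_m$ is a weak equivalence (only $m=n,n-1$ is actually used), then each $\Sigma^k f_m$ is an isomorphism in $Ho({\cal C})$ since suspension is an autoequivalence, so the first, second and fourth vertical arrows above are isomorphisms; the five lemma for triangulated categories then forces $h$ to be an isomorphism in $Ho({\cal C})$, i.e.\ a weak equivalence in ${\cal C}$. I do not expect a genuine obstacle in this proposition: its entire content is the existence half of TR3 together with the triangulated five lemma, and the only care needed is the bookkeeping of suspensions and signs in identifying $Tot\left(Gr^2_n{\cal F}C_*\right)$ with the mapping cone of $\Sigma^{n-1}d^C_n$ and in checking that the structure maps of its triangle are suspensions of $d^C_n$ and of the evident projections. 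What this argument does \emph{not} give --- and the reason it falls short of answering Questions 2a and 2b --- is any coherence among the $h$'s: the TR3-filler is unique only up to a map $\Sigma^n C_n\to\Sigma^{n-1}C_{n-1}$, and it is exactly the obstruction to choosing these fillers compatibly over all $n$, and at the higher filtration stages $Gr^l$ with $l>2$, that the higher Toda brackets introduced later in this paper measure.
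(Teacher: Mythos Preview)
Your proposal is correct and is essentially the same argument as the paper's: both identify $Tot\left(Gr^2_n{\cal F}C_*\right)$ as a cofiber/mapping cone and then produce $h$ as the fill-in map on cofibers guaranteed by the triangulated structure (the paper cites Hovey's Prop.~6.3.5, which is exactly the TR3 axiom you invoke). Your explicit use of the triangulated five lemma for the weak-equivalence claim, and your closing remark on the non-uniqueness of the TR3 filler as the origin of the higher obstructions, make the logic slightly more transparent than the paper's treatment, but the content is identical.
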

\begin{proof} 
Since $Tot \left(Gr^2_n{\cal F}C_{*}\right)$ is just the cofiber of the map $d^{C}_{n+2}: \Sigma^{n+1}C_{n+2}\rightarrow \Sigma^n C_{n+1}$, the map $h$
is just the map induced in cofibers:
\[ \xymatrix{ \Sigma^{n+1}C_{n+2}\ar[rr]^{\Sigma^{n+1}d^{C}_{n+2}} \ar[d]_{\Sigma^{n+1}f_{n+2}} & & \Sigma^{n+1}C_{n+1}\ar[d]^{\Sigma^{n+1}f_{n+1}}\ar[r] & Tot \left(Gr^2_n{\cal F}C_{*}\right)\ar@{-->}[d] \\
\Sigma^{n+1}D_{n+2}\ar[rr]^{\Sigma^{n+1}d^{D}_{n+2}} &  & \Sigma^{n+1}D_{n+1}\ar[r] & Tot \left(Gr^2_n{\cal F}D_{*}\right) , }\]
where the left-hand square commutes in $Ho( \mathcal{C})$. That this map of cofibers exists is part of what one proves in the usual process of showing that
the homotopy category of a stable model category is triangulated (or, more generally, that the homotopy category of a pointed model category is pretriangulated);
one can consult Prop.~6.3.5 of [H].
\vskip.1in

We include an appealingly explicit construction of this map in the special case that $\mathcal{C}$ is the category of bounded-below 
chain complexes of $R$-modules, for some ring $R$. 
By assumption, for each $n$ there exists a chain homotopy $s(1)_{*n}:C_{*n}\to D[1,-1]_{*n} := D_{(*+1)(n-1)}$ with
\begin{equation}\label{eqn:stage1hom}
f_{*n-1}\circ d^{C}_{*n} - d^{D}_{*n}\circ f_{*n} = d^{1D}_{(*+1)(n-1)}\circ s(1)_{*n} + s(1)_{(*-1)n}\circ d^{1C}_{*n}
\end{equation}
where $d^{1C}_{*m}:C_{*m}\to C_{(*-1)m}$ denotes the differential in the first coordinate (similarly for $D_{**}$). Now $Tot_{k+n}\left({\cal F}_nC_{**}/{\cal F}_{n-2}C_{**}\right)\cong C_{(k+1)(n-1)} \oplus C_{kn}$ and similarly for $D_{**}$. By
equation (\ref{eqn:stage1hom}) above, the map
\begin{gather}
C_{(k+1)(n-1)}\oplus C_{kn}\to D_{(k+1)(n-1)}\oplus D_{kn},\\
\left(x_1,x_2\right)\mapsto \left(f_{(k+1)(n-1)}(x_1) + s(1)_{kn}(x_2),f_{kn}(x_2)\right)
\end{gather}
defines a chain map of total complexes $Tot_*\left(Gr^2_n{\cal F}C_{**}\right)\to Tot_*\left(Gr^2_n{\cal F}D_{**}\right)$ which, by construction, agrees with $f_{\bullet}$ on the subquotients $Gr^1_m{\cal F}C_{**}$ for $m = n,n-1$.
\end{proof}
\vskip.1in

Now since the square
\[ \xymatrix{   C_{n+1} \ar[d]_{ d^{C}_{n}} \ar[r]^{ f_{n+1}} & 
                  D_{n+1} \ar[d]^{ d^{D}_{n}} \\ 
                C_{n}  \ar[r]^{ f_{n}} & 
                  D_{n}  }\]
commutes {\em up to homotopy,} the difference $ f_n \circ  d^{C}_n -  d^{D}_n\circ  f_{n+1}$
is a nulhomotopic map $  C_{n+1}\rightarrow   D_n$. Of course, the same is true with $n$ replaced by $n+1$ throughout.
As a consequence, in the diagram
\[ \xymatrix{ 
                C_{n+2} \ar[d]_{ d^{C}_{n+1}} \ar[r]^{ f_{n+2}} & 
                  D_{n+2} \ar[d]^{ d^{D}_{n+1}} \\ 
                C_{n+1} \ar[d]_{ d^{C}_{n}} \ar[r]^{ f_{n+1}} & 
                  D_{n+1} \ar[d]^{ d^{D}_{n}} \\ 
                C_{n}  \ar[r]^{ f_{n}} & 
                  D_{n},  }\]
we have {\em two} nulhomotopies of the difference map
\[ f_n \circ  d^{C}_n\circ d^{C}_{n+1} -  d^{D}_n\circ d^{D}_{n+1}\circ  f_{n+2}: C_{n+2}\rightarrow D_n,\]
which specifies a map $T(2,n;f_{\bullet}): \Sigma C_{n+2}\rightarrow D_n$. The homotopy class $[T(2,n;f_{\bullet})]\in [\Sigma C_{n+2}, D_n]$ is the 
obstruction to extending Prop.~\ref{map on gr^2} to $Gr^3_n$:
\begin{proposition}\label{gr^3 map}
For each fixed $n\ge 2$ there exists a filtration-preserving weak equivalence
\[
Tot\left(Gr^3_n{\cal F}C_{*}\right)\overset{\simeq}\longrightarrow Tot\left(Gr^3_n{\cal F}D_{*}\right)
\]
agreeing with $f_m$ on the subquotients $Gr^1_m{\cal F}C_{*}$ ($m = n,n-1,n-2$) iff $[T(2,n;f_{\bullet})] = 0$.
\end{proposition}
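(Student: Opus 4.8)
The plan is to exhibit $Tot\left(Gr^3_n{\cal F}C_{*}\right)$ as a one-cell extension of $Tot\left(Gr^2_n{\cal F}C_{*}\right)$ and then run the standard obstruction-theoretic fill-in inside the triangulated category $Ho(\mathcal{C})$. The three-term skeletal filtration of $Gr^3_n{\cal F}C_{*}$ gives a short exact sequence $Gr^1_{n-2}{\cal F}C_{*}\inj Gr^3_n{\cal F}C_{*}\surj Gr^2_n{\cal F}C_{*}$ of chain-complex objects, hence, after applying $Tot$, a distinguished triangle
\[
\Sigma^{n-2}C_{n-2}\overset{\iota^C}\longrightarrow Tot\left(Gr^3_n{\cal F}C_{*}\right)\overset{\rho^C}\longrightarrow Tot\left(Gr^2_n{\cal F}C_{*}\right)\overset{\partial^C}\longrightarrow \Sigma^{n-1}C_{n-2},
\]
and similarly for $D_{*}$; here $\partial^C$ is the connecting map of the skeletal filtration, so it is natural for filtration-preserving maps. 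From Prop.~\ref{map on gr^2} (applied in degree $n$) we already have a map $h\colon Tot\left(Gr^2_n{\cal F}C_{*}\right)\to Tot\left(Gr^2_n{\cal F}D_{*}\right)$ restricting to $f_n$ and $f_{n-1}$ on the top two subquotients, and we have $\Sigma^{n-2}f_{n-2}$ on the bottom one. Rotating the displayed triangle twice so that $\partial^C$ becomes its leading arrow and invoking the fill-in axiom for triangulated categories (the same fact used in the proof of Prop.~\ref{map on gr^2}), a filtration-preserving $g\colon Tot\left(Gr^3_n{\cal F}C_{*}\right)\to Tot\left(Gr^3_n{\cal F}D_{*}\right)$ with $\rho^D g=h\rho^C$ and $g\iota^C=\iota^D\left(\Sigma^{n-2}f_{n-2}\right)$ --- and hence agreeing with $f_m$ on $Gr^1_m$ for $m=n,n-1,n-2$ --- will exist as soon as the square expressing $\partial^D\circ h=\left(\Sigma^{n-1}f_{n-2}\right)\circ\partial^C$ commutes in $Ho(\mathcal{C})$.

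The heart of the matter is thus to identify the obstruction $\delta:=\partial^D h-\left(\Sigma^{n-1}f_{n-2}\right)\partial^C\in\left[\,Tot\left(Gr^2_n{\cal F}C_{*}\right),\,\Sigma^{n-1}D_{n-2}\,\right]$ with (a representative of) $T(2,n;f_{\bullet})$. Precomposing $\delta$ with the inclusion $\Sigma^{n-1}C_{n-1}\to Tot\left(Gr^2_n{\cal F}C_{*}\right)$ of the bottom cell, one finds --- because $h$ restricts there to $f_{n-1}$, and $\partial^C,\partial^D$ restrict, by naturality of connecting maps, to $\Sigma^{n-1}$ of the relevant differentials --- that the composite is $\Sigma^{n-1}$ of $f_{n-2}\,d^C-d^D\,f_{n-1}$, which is null in $Ho(\mathcal{C})$ by the degreewise homotopy commutativity (\ref{diag:diag4.3.2}). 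Hence $\delta$ factors through the top-cell quotient, $\delta=\bar T\circ\bigl(Tot\left(Gr^2_n{\cal F}C_{*}\right)\surj\Sigma^{n}C_{n}\bigr)$ for some map $\bar T$; after the appropriate desuspension, unwinding the construction shows that $\bar T$ is precisely the difference of the two nullhomotopies of the double composite $f\circ d\circ d-d\circ d\circ f$ defining $T(2,n;f_{\bullet})$ --- one nullhomotopy assembled from the degreewise nullhomotopies that produce $h$, the other from the nullhomotopies of $d\circ d$ implicit in the skeletal filtration. Thus $\delta\simeq 0$, for suitable choices of $h$ and of the intervening nullhomotopies (which is exactly the indeterminacy in the bracket), if and only if $[T(2,n;f_{\bullet})]=0$; this establishes existence of $g$ when $[T(2,n;f_{\bullet})]=0$ (the ``if'' direction), and when moreover every $f_m$ is a weak equivalence, then so are $\Sigma^{n-2}f_{n-2}$ and, by Prop.~\ref{map on gr^2}, $h$, whence $g$ is a weak equivalence by the five-lemma for triangles in $Ho(\mathcal{C})$.

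For the converse, a filtration-preserving $g$ agreeing with the $f_m$ on the $Gr^1_m$ is in particular compatible with $\iota$ and $\rho$, so naturality of $\partial$ forces $\partial^D h'=\left(\Sigma^{n-1}f_{n-2}\right)\partial^C$, where $h'$ is the map induced by $g$ on $Gr^2_n$; since $h'$ restricts to $f_n,f_{n-1}$ on the top two subquotients it is an admissible choice in Prop.~\ref{map on gr^2}, and the vanishing of the corresponding $\delta$ places $0$ in the bracket, i.e.\ $[T(2,n;f_{\bullet})]=0$ --- here one uses, in the usual way, that the bracket class does not depend, within its indeterminacy, on the choice of intermediate map. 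I expect the main work to be the middle step: matching $\bar T$ with $T(2,n;f_{\bullet})$ with the signs and the indeterminacy under control, and verifying throughout that $g$ genuinely respects the skeletal filtration --- this is why one must build $g$ from the connecting triangle of the filtration, and use its naturality, rather than from an arbitrary choice of cone. As with Prop.~\ref{map on gr^2}, when $\mathcal{C}$ is the category of bounded-below chain complexes of $R$-modules this all becomes explicit: writing $Tot\left(Gr^3_n{\cal F}C_{**}\right)$ as a sum of three shifted copies of the $C_{**}$, one seeks a chain map of the form $(x_1,x_2,x_3)\mapsto\bigl(f(x_1)+s(1)(x_2)+s(2)(x_3),\,f(x_2)+s(1)(x_3),\,f(x_3)\bigr)$, and the chain-map condition can be solved for the stage-$2$ homotopy $s(2)$ exactly when a certain cycle representing $T(2,n;f_{\bullet})$ is a boundary.
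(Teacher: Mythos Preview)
Your argument is correct, but it runs the obstruction theory from the opposite end of the filtration than the paper does. The paper builds $Tot(Gr^3_n\mathcal{F}C_*)$ by attaching the \emph{highest}-degree cell as a cofiber of a map $\Sigma^{n+2}C_{n+3}\to Tot(Gr^2_n\mathcal{F}C_*)$, so the obstruction is the failure of the square
\[
\xymatrix{\Sigma^{n+2}C_{n+3}\ar[r]\ar[d] & \Sigma^{n+2}D_{n+3}\ar[d]\\ Tot(Gr^2_n\mathcal{F}C_*)\ar[r]^h & Tot(Gr^2_n\mathcal{F}D_*)}
\]
to commute up to homotopy; the paper then reduces this, via the two nulhomotopies defining the attaching maps, directly to the map $\Sigma^{n+2}C_{n+3}\to\Sigma^{n+1}D_{n+1}$, which is $\Sigma^{n+1}T(2,n;f_\bullet)$. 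You instead exhibit $Tot(Gr^3_n\mathcal{F}C_*)$ by attaching the \emph{lowest}-degree cell $\Sigma^{n-2}C_{n-2}$ as a sub, so your obstruction $\delta$ lives in $[Tot(Gr^2_n\mathcal{F}C_*),\Sigma^{n-1}D_{n-2}]$ and must be factored through the top-cell quotient before it can be identified with the bracket. Both routes are valid and arrive at the same bracket; the paper's is a step shorter because the attaching map \emph{is} the bracket after one identification, whereas yours requires the extra factoring step (your ``precomposing $\delta$ with the inclusion\dots'' paragraph). On the other hand, your indexing is actually more consistent with the paper's own definition $Gr^l_n\mathcal{F}C_*=\mathcal{F}_nC_*/\mathcal{F}_{n-l}C_*$, and your explicit chain-level formula with the second-order homotopy $s(2)$ is a nice addition the paper does not provide at this stage.
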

\begin{proof}
We note that $Tot\left( Gr^3_n{\cal F}C_*\right)$ sits in a tower of cofiber sequences
\[\xymatrix{
 \Sigma^{n+1}C_{n+2} \ar[r] & \Sigma^{n+1} C_{n+1} \ar[d] \\
 \Sigma^{n+2}C_{n+3} \ar[r] & Tot ( Gr^2_{n}{\cal F}C_*) \ar[d] \\
                            & Tot ( Gr^3_{n}{\cal F}C_*) .}\]
From Prop.~\ref{map on gr^2}, we know that we have a map defined up to homotopy on the top portion of this tower and its analogue for $D_{\bullet}$:
\[\xymatrix{
 \Sigma^{n+1}C_{n+2} \ar[r]\ar@/^1pc/[rr] & \Sigma^{n+1} C_{n+1} \ar@/_1pc/[rr] \ar[d] & \Sigma^{n+1}D_{n+2} \ar[r] & \Sigma^{n+1} D_{n+1}  \ar[d] \\
 \Sigma^{n+2}C_{n+3} \ar[r] & Tot ( Gr^2_{n}{\cal F}C_*) \ar[d]\ar@/_1pc/[rr] &  \Sigma^{n+2}D_{n+3} \ar[r] & Tot ( Gr^2_{n}{\cal F}D_*) \ar[d] \\
                            & Tot ( Gr^3_{n}{\cal F}C_*) &  & Tot ( Gr^3_{n}{\cal F}D_*) .}\]
We want to extend this map, up to homotopy, to the bottoms of the towers. This is equivalent to asking that the square
\begin{equation}\label{diagram 1} \xymatrix{
 \Sigma^{n+2}C_{n+3} \ar[r]^{\Sigma^{n+2}f_{n+3}} \ar[d] & \Sigma^{n+2} D_{n+3} \ar[d] \\
 Tot ( Gr^2_n{\cal F}C_* ) \ar[r] & Tot (Gr^2_n{\cal F}D_*) }\end{equation}
commute up to homotopy. Recall that the map $\Sigma^{n+2}C_{n+3}\rightarrow Tot ( Gr^2_n{\cal F}C_*)$ arises
from the two nulhomotopies of the map $\Sigma^{n+1}C_{n+3}\rightarrow Tot (Gr^2_n{\cal F}C_*)$, 
one arising from the nulhomotopy of the composite $\Sigma^{n+1}C_{n+3}\rightarrow
\Sigma^{n+1}C_{n+2}\rightarrow \Sigma^{n+1}C_{n+1}$, and one arising from the nulhomotopy of the composite
$\Sigma^{n+1}C_{n+2}\rightarrow \Sigma^{n+1}C_{n+1}\rightarrow Tot (Gr^2_n{\cal F}C_*)$. Hence the homotopy-commutativity of diagram~\ref{diagram 1}
is equivalent to the compatibility-up-to-homotopy of $f_*$ with these nulhomotopies, i.e., that the two nulhomotopies of 
$\Sigma^{n+1}C_{n+2}\rightarrow \Sigma^{n+1}D_{n+1}$ given by the two composites in the diagram
\[ \xymatrix{ \Sigma^{n+1}C_{n+3} \ar[r]^{f_{n+3}}\ar[d] & \Sigma^{n+1}D_{n+3} \ar[d] \\
              \Sigma^{n+1}C_{n+2}                 \ar[d] & \Sigma^{n+1}D_{n+2} \ar[d] \\
              \Sigma^{n+1}C_{n+1} \ar[r]^{f_{n+1}}       & \Sigma^{n+1}D_{n+1} }\]
give rise to a nulhomotopic map $\Sigma^{n+2}C_{n+3}\rightarrow \Sigma^{n+1}D_{n+1}$. (That the
two nulhomotopies of $\Sigma^{n+1}C_{n+2}\rightarrow Tot (Gr^2_n{\cal F}D_*)$ automatically give rise to the 
zero map $\Sigma^{n+2}C_{n+2}\rightarrow Tot (Gr^2_n{\cal F}D_*)$ is actually a restatement of Prop.~\ref{map on gr^2}!)
But the map $\Sigma^{n+2}C_{n+3}\rightarrow \Sigma^{n+1}D_{n+1}$ in question is precisely $\Sigma^{n+1}T(2,n;f_{\bullet})$.
Hence the vanishing of the Toda bracket $T(2,n;f_{\bullet})$ is equivalent to being able to extend the map on
$Gr^2_n$ to a map on $Gr^3_n$.
\end{proof}

The general case is described by the following theorem.

\begin{theorem} \label{main thm} Given a map of simplicial homotopy types $f_{\bullet}:C_{\bullet}\to D_{\bullet}$ as above, the map $f_m:Gr^1_m{\cal F}C_{*}\overset{\simeq}\longrightarrow Gr^1_m{\cal F}D_{*}$ extends to a filtration-preserving map
\[
Tot\left(Gr^k_n{\cal F}C_{*}\right)\overset{}\longrightarrow Tot\left(Gr^k_n{\cal F}D_{*}\right)
\]
for some fixed $k \geq 3$ iff the Toda brackets $T(N,i; f_{\bullet})$ vanish 
for all pairs of integers $(N,i)$ with $2\leq N < k$ and $n\leq i$ and $N+i < n+k$.
Given this vanishing, the next higher-order set of Toda brackets are defined:
\[ T(k, n; f_{\bullet}): \Sigma^{k-1} C_{n+k} \rightarrow D_n \]
is the map given by the two nulhomotopic maps in the square
\[ \xymatrix{   \Sigma^{k-2}C_{n+k} \ar[rr]^{T(k-1,n+1;f_{\bullet})} \ar[d]_{\Sigma^{k-2}d^{C}_{n+k}} & & D_{n+1}  \ar[d]^{d^{D}_{n+1}} \\ 
                \Sigma^{k-2}C_{n+k-1} \ar[rr]^{T(k-1,n; f_{\bullet})} & & D_n. } \]

If the Toda brackets vanish for all orders and degrees, then there exists a filtration-preserving map
\[
Tot\left(C_{*}\right)\overset{}\longrightarrow Tot\left(D_{*}\right)
\]
which on $Gr^1_{\bullet}{\cal F}C_{*}$ agrees with $f_{\bullet}$.

Finally, if $f_{n}$ is a weak equivalence for each $n$, then the map on $Tot \left(Gr^{k}_m\cal{F}C_*\right)$ is also a weak equivalence whenever it is defined (i.e.,
whenever all the appropriate Toda brackets, described above, vanish).
\end{theorem}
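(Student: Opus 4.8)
The plan is to proceed by induction on $k$, the length of the graded piece, with the base cases $k=1,2,3$ already handled by Proposition~\ref{map on gr^2} and Proposition~\ref{gr^3 map} (the case $k=1$ being trivial since $Gr^1_m\mathcal{F}C_* = \Sigma^m C_m$ and the map there is just $\Sigma^m f_m$). For the inductive step, I would first set up the tower of cofiber sequences realizing $Tot(Gr^k_n\mathcal{F}C_*)$: there is a cofiber sequence
\[
Tot(Gr^{k-1}_n\mathcal{F}C_*) \longrightarrow Tot(Gr^k_n\mathcal{F}C_*) \longrightarrow \Sigma^{n+k-1} C_{n+k} \longrightarrow \Sigma Tot(Gr^{k-1}_n\mathcal{F}C_*),
\]
and dually, $Tot(Gr^k_n\mathcal{F}C_*)$ is built as the cofiber of a map $\Sigma^{n+k-2}C_{n+k}\to Tot(Gr^{k-1}_{n}\mathcal{F}C_*)$ coming from the skeletal filtration one step in. By the inductive hypothesis applied to the indices $(k-1,n)$ and $(k-1,n+1)$, and provided the Toda brackets $T(N,i;f_\bullet)$ vanish in the stated range, we have compatible maps on $Tot(Gr^{k-1}_n\mathcal{F}C_*)$ and on the relevant shifted copies; extending to $Tot(Gr^k_n\mathcal{F}C_*)$ then amounts, exactly as in the proof of Proposition~\ref{gr^3 map}, to checking that a certain square commutes up to homotopy, and the obstruction to that commutativity is precisely the map $T(k,n;f_\bullet):\Sigma^{k-1}C_{n+k}\to D_n$ defined by the displayed square of two nulhomotopies. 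I would verify that the two nulhomotopies in that square are exactly the ones induced by the inductively constructed maps together with the homotopies witnessing $T(k-1,n;f_\bullet)=0$ and $T(k-1,n+1;f_\bullet)=0$, so that the induced map $\Sigma^{k-1}C_{n+k}\to D_n$ is well-defined and its vanishing is equivalent to the desired extension.

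Next, for the passage to $Tot(C_*)$ itself: if all Toda brackets vanish, then for every $n$ we obtain compatible maps $Tot(\mathcal{F}_n C_*)\to Tot(\mathcal{F}_n D_*)$ — here one takes $k$ large (in fact $Tot(\mathcal{F}_n C_*) = Tot(Gr^{n+1}_n\mathcal{F}C_*)$ since $\mathcal{F}_{-1}C_* \simeq 0$) — and these are compatible with the filtration inclusions up to coherent homotopy, so passing to the homotopy colimit along the geometric realization tower yields the map $Tot(C_*)\to Tot(D_*)$ restricting to $f_\bullet$ on the associated graded. One should be slightly careful that the maps at successive filtration stages are genuinely compatible (not merely stagewise defined), but this follows because the extension at stage $k$ was constructed to restrict to the one at stage $k-1$ by the cofiber-sequence construction above.

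Finally, the weak-equivalence statement: assuming each $f_n$ is a weak equivalence, I would argue by the same induction that the map on $Tot(Gr^k_n\mathcal{F}C_*)$ is a weak equivalence whenever defined. The base case is Proposition~\ref{map on gr^2} (and the trivial $k=1$ case). For the inductive step, in the cofiber sequence above the outer two terms map by weak equivalences — the left one by the inductive hypothesis, the right one being $\Sigma^{n+k-1}f_{n+k}$ — so by the five lemma in the triangulated category $Ho(\mathcal{C})$ (equivalently, since a map of cofiber sequences which is an equivalence on two of three vertices is an equivalence on the third) the middle map is a weak equivalence. For $Tot(C_*)\to Tot(D_*)$ one then invokes that a homotopy colimit of a sequence of weak equivalences of sequences is a weak equivalence (compatibility of $hocolim$ with weak equivalences of directed systems in a stable model category).

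The main obstacle I expect is bookkeeping rather than conceptual: pinning down precisely that the two nulhomotopies appearing in the definition of $T(k,n;f_\bullet)$ are the correct ones, i.e., that the square in the theorem statement really is the obstruction square for extending from $Gr^{k-1}$ to $Gr^k$. This requires carefully tracking which nulhomotopy of $T(k-1,n;f_\bullet)\circ(\Sigma^{k-2}d^C_{n+k})$ and of $(d^D_{n+1})\circ T(k-1,n+1;f_\bullet)$ is used, and checking that the difference of the two nulhomotopies, regarded as a map out of the suspension $\Sigma^{k-1}C_{n+k}$, is independent of the auxiliary choices made earlier in the induction up to the indeterminacy inherent in Toda brackets — and that this indeterminacy does not affect the vanishing criterion. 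In the chain-complex case one could again give the explicit formula (extending equation~(\ref{eqn:stage1hom}) to higher homotopies $s(l)_{*n}$), which makes the compatibility transparent, but the general triangulated-category argument needs the care indicated.
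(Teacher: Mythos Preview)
Your proposal is correct and follows essentially the same approach as the paper's proof: induction on $k$ using the geometric realization tower of cofiber sequences, identifying the obstruction to extending from $Gr^{k-1}$ to $Gr^k$ as the Toda bracket $T(k,n;f_\bullet)$, and deducing the weak-equivalence claim from the two-out-of-three property for maps of cofiber sequences. The paper presents the inductive step by drawing out the full paired tower (your cofiber sequences packaged into one large diagram), but the logical content is identical; if anything, you are slightly more explicit than the paper about the passage to the homotopy colimit and about the indeterminacy issue, which the paper leaves implicit.
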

\begin{proof}
Essentially the same as in Prop.~\ref{gr^3 map}. We work by induction: assume the theorem is true for all $\ell\leq k$ for some $k$.
In other words, assume that when $\ell\leq k$, the homotopy map $f_{\bullet}$ induces a map $Tot (Gr^{\ell}_{m}\mathcal{F}C_*) {\rightarrow} 
Tot (Gr^{\ell}_{m}\mathcal{F}D_*)$
if and only if all the Toda brackets $T(N,n;f_{\bullet})$ vanish for $2\leq N <\ell$ and $n\leq i$ and $N+i< n+\ell$.
We want to show that the same statement then holds for $k+1$.
Under our assumptions, we have a partially-defined map of geometric realization towers:
\begin{equation}\label{general map of geom realization towers} \xymatrix{
 \Sigma^{n+1}C_{n+2} \ar[r]\ar@/^1pc/[rr] & \Sigma^{n} C_{n+1} \ar@/_1pc/[rr] \ar[d] & \Sigma^{n+1}D_{n+2} \ar[r] & \Sigma^{n} D_{n+1}  \ar[d] \\
 \Sigma^{n+2}C_{n+3} \ar[r]\ar@/^1pc/[rr] & Tot ( Gr^2_{n}{\cal F}C_*) \ar[d]\ar@/_1pc/[rr] &  \Sigma^{n+2}D_{n+3} \ar[r] & Tot ( Gr^2_{n}{\cal F}D_*) \ar[d] \\
   & \vdots\ar[d]  &  & \vdots\ar[d] \\
 \Sigma^{n+k-1}C_{n+k} \ar[r]\ar@/^1pc/[rr] & Tot ( Gr^{k-1}_{n}{\cal F}C_*) \ar[d]\ar@/_1pc/[rr] &  \Sigma^{n+k-1}D_{n+k} \ar[r] & Tot ( Gr^{k-1}_{n}{\cal F}D_*) \ar[d] \\
 \Sigma^{n+k}C_{n+k+1} \ar[r] & Tot ( Gr^k_{n}{\cal F}C_*) \ar[d]\ar@/_1pc/[rr] &  \Sigma^{n+k}D_{n+k+1} \ar[r] & Tot ( Gr^k_{n}{\cal F}D_*) \ar[d] \\
                            & Tot ( Gr^{k+1}_{n}{\cal F}C_*) &  & Tot ( Gr^{k+1}_{n}{\cal F}D_*) .}\end{equation}
The problem of extending this map to the bottom-most stage of the towers is exactly the problem of extending the existing map from
$Tot (Gr^k_n{\cal F}C_*)\rightarrow Tot (Gr^k_n{\cal F}D_*)$ to 
$Tot (Gr^{k+1}_n{\cal F}C_*)\rightarrow Tot (Gr^{k+1}_n{\cal F}D_*)$.
The maps $\Sigma^{n+k}C_{n+k+1}\rightarrow Tot ( Gr^k_{n}{\cal F}C_*)$ and $\Sigma^{n+k}D_{n+k+1}\rightarrow Tot ( Gr^k_{n}{\cal F}D_*)$
at the bottoms of these towers arise from the two nulhomotopies of 
$\Sigma^{n+k-1}C_{n+k+1}\rightarrow Tot (Gr^k_n{\cal F}C_*)$ and the two nulhomotopies of 
$\Sigma^{n+k-1}D_{n+k+1}\rightarrow Tot (Gr^k_n{\cal F}D_*)$, and so the existence of the desired map at the 
bottom of the towers in diagram~\ref{general map of geom realization towers} is equivalent to the 
nulhomotopies of $\Sigma^{n+k-1}C_{n+k+1}\rightarrow Tot (Gr^k_n{\cal F}C_*)$ being compatible with the nulhomotopies
of $\Sigma^{n+k-1}D_{n+k+1}\rightarrow Tot (Gr^k_n{\cal F}D_*)$, that is, it is equivalent to the two composite maps
\[ \xymatrix{   \Sigma^{n+k-1}C_{n+k+1} \ar[rr]^{\Sigma^nT(k,n+1;f_{\bullet})} \ar[d]_{d^{C}_{n+k+1}} & & \Sigma^n D_{n+1}  \ar[d]^{d^{D}_{n+1}} \\  
                \Sigma^{n+k-1}C_{n+k} \ar[rr]^{\Sigma^nT(k,n; f_{\bullet})} &  & \Sigma^n D_n } \]
giving rise to a map $\Sigma^{n+k} C_{n+k+1}\rightarrow \Sigma^n D_n$ which is nulhomotopic. But this map
$\Sigma^{n+k} C_{n+k+1}\rightarrow \Sigma^n D_n$ is precisely $\Sigma^n T(k+1,n;f_{\bullet})$.
So the vanishing of the Toda bracket $T(k+1,n;f_{\bullet})$ occurs precisely when the partially-defined map of geometric realization towers in 
diagram~\ref{general map of geom realization towers} extends to a map $Tot ( Gr^{k+1}_{n}{\cal F}C_*)\rightarrow Tot (Gr^{k+1}_n{\cal F}D_*)$.

We note that whenever a map of geometric realization towers as in 
diagram~\ref{general map of geom realization towers} exists, one notices that each induced map $Tot (Gr^{i+1}_n{\cal F}C_*)\rightarrow Tot (Gr^{i+1}_n{\cal F}D_*)$
in the tower is the map induced on cofibers of horizontal maps of a square
\[ \xymatrix{ \Sigma^{n+i} C_{n+i+1} \ar[r]\ar[d] & Tot (Gr^{i}_n{\cal F}C_*) \ar[d] \\
\Sigma^{n+i} D_{n+i+1} \ar[r] & Tot (Gr^{i}_n{\cal F}D_*) ,
}\]
so if one knows that the vertical maps are weak equivalences, then so is the induced map
$Tot (Gr^{i+1}_n{\cal F}C_*)\rightarrow Tot (Gr^{i+1}_n{\cal F}D_*)$. By an obvious induction we get that, if $f_n$ is a weak equivalence for each $n$,
then so is each map $Tot (Gr^{i+1}_n{\cal F}C_*)\rightarrow Tot (Gr^{i+1}_n{\cal F}D_*)$ whenever it is defined.
\end{proof}

Now recall that one has both a homological and a cohomological spectral sequence associated to a tower of homotopy cofiber sequences.
We start with the cohomological spectral sequence. Suppose one chooses an object $S$ of $\mathcal{C}$ and considers the representable functor
$H: (Ho (\mathcal{C}))^{op}\rightarrow Ab$ given by $H(-) = [-, S]$.
This functor sends each triangle
\[ X \rightarrow Y \rightarrow Z \rightarrow \Sigma X\]
in $Ho (\mathcal{C})$ to a long exact sequence of abelian groups
\[ \dots \rightarrow H(\Sigma^{} X) \rightarrow H(Z)\rightarrow H(Y)\rightarrow H(X)\rightarrow H(\Sigma^{-1} Z) \rightarrow \dots \]
and, as a consequence, applying $H$ to the tower of homotopy cofiber sequences (i.e., triangles)
\begin{equation}\label{geom realization tower for ss}
\xymatrix{
\Sigma^{n} C_{n+1} \ar[r] & 0 \ar[d] \\
\Sigma^{n+1} C_{n+2} \ar[r] & \Sigma^{n+1} C_{n+1} \ar[d] \\
\Sigma^{n+2} C_{n+3} \ar[r] & Tot (Gr^2_n FC_{\bullet}) \ar[d] \\
 & \vdots\ar[d]  \\
\Sigma^{n+k-1} C_{n+k} \ar[r] & Tot (Gr^{k-1}_n FC_{\bullet}) \ar[d] \\
                                & Tot (Gr^{k}_n FC_{\bullet}) 
}\end{equation} 
yields an exact couple and hence a spectral sequence.
If we let $E_1^{s,t} = H(\Sigma^tC_s)$ if $n+1\leq s\leq n+k$ and $0$ otherwise, then the associated spectral sequence has differentials
\[ d_r^{s,t}: E_r^{s,t}\rightarrow E_r^{s+r,t+r-1}\]
and $d_1^{s,t}: H(\Sigma^tC_s)\rightarrow H(\Sigma^tC_{s+1})$ coincides with the map $H(\Sigma^t d^{C}_{s+1})$.
The spectral sequence converges strongly to $H(\Sigma^{t-s} Tot (Gr^k_n FC_{\bullet}))$.

If we instead have the infinite tower
\begin{equation}\label{infinite geom realization tower for ss}
\xymatrix{
 \Sigma^{-1}C_{0} \ar[r] & 0 \ar[d] \\
 C_{1} \ar[r] &   C_{0} \ar[d] \\
\Sigma^{} C_{2} \ar[r] & Tot ( F_1C_{\bullet}) \ar[d] \\
\Sigma^{2} C_{3} \ar[r] & Tot ( F_2C_{\bullet}) \ar[d] \\
 & \vdots
 }\end{equation}
then our spectral sequence has $E_1^{s,t}\cong H(\Sigma^t C_s)$ and
converges conditionally to $H(\Sigma^{t-s}Tot (C_{\bullet}))$.

We also have a homological spectral sequence. In order to get it to compute $Tot (Gr^k_n FC_{\bullet})$ and not $Tot (Gr^n_n FC_{\bullet})\simeq 0$, we first
have to ``dualize'' the tower of diagram~\ref{geom realization tower for ss}
by taking the levelwise homotopy cofiber of the map from that tower into the tower of homotopy cofiber sequences
\[ 
\xymatrix{
0 \ar[r] & Tot (Gr^k_n FC_{\bullet}) \ar[d] \\
0 \ar[r] & Tot (Gr^k_n FC_{\bullet}) \ar[d] \\
 & \vdots\ar[d]  \\
0 \ar[r] & Tot (Gr^k_n FC_{\bullet}) \ar[d] \\
 &  Tot (Gr^k_n FC_{\bullet}) .
}\]
We write $T_i$ for the cofiber of the map $Tot (Gr^i_n FC_{\bullet}) \rightarrow Tot (Gr^k_n FC_{\bullet})$. The levelwise cofiber tower we now have is the tower
of homotopy cofiber sequences
\begin{equation}\label{dual geom realization tower for ss}
\xymatrix{
\Sigma^{n+1}C_{n+1} \ar[r] & Tot (Gr^k_n FC_{\bullet} ) \ar[d] \\
\Sigma^{n+2} C_{n+2} \ar[r] & T_0 \ar[d] \\
\Sigma^{n+3} C_{n+3} \ar[r] & T_1 \ar[d] \\
 & \vdots\ar[d]  \\
\Sigma^{n+k} C_{n+k} \ar[r] & T_{k-1} \ar[d] \\
 & T_k \simeq 0.}\end{equation}

If $S$ is an object of $\mathcal{C}$, we have the co-representable functor
$H: Ho (\mathcal{C})\rightarrow Ab$ given by $H(-) = [S, -]$. Again, this functor sends triangles in $Ho (\mathcal{C})$ to long exact sequences,
so applying $H$ to the tower of homotopy cofiber sequences of diagram~\ref{dual geom realization tower for ss} yields an exact couple and hence a spectral sequence.
If we let $E^1_{s,t} = H(\Sigma^tC_s)$ if $n+1\leq s\leq n+k$ and $0$ otherwise, then the associated spectral sequence has differentials
\[ d_r^{s,t}: E^r_{s,t}\rightarrow E^r_{s-r,t-r+1}\]
and $d^1_{s,t}: H(\Sigma^tC_s)\rightarrow H(\Sigma^tC_{s-1})$ coincides with the map $H(\Sigma^t d^{C}_{s})$.
The spectral sequence converges strongly to $H(\Sigma^{t-s} Tot (Gr^k_n FC_{\bullet}))$.
If we instead dualize the tower of diagram~\ref{infinite geom realization tower for ss} and then apply $H$, the resulting spectral sequence
has $E_1^{s,t}\cong H(\Sigma^t C_s)$ and
converges conditionally to $H(\Sigma^{t-s} Tot (C_{\bullet}))$.
\vskip.1in

The reason we are describing these spectral sequences is their relationship to the Toda brackets of Thm.~\ref{main thm}. In the following theorem,
in order to avoid having to write everything twice,
we use the cohomological notation $E_r^{s,t}$ for our spectral sequence even though the theorem also applies equally well to the homological spectral sequence.
\begin{theorem}\label{thm:brackets-diff} Let $f_{\bullet}:C_{\bullet}\to D_{\bullet}$ be a map of totalizable simplicial homotopy types in $Ho (\mathcal{C})$. 
Suppose $H$ is a cohomological functor $H(-) = [-, S]$ {\em or} a homological functor $H(-) = [S, -]$ on $Ho (\mathcal{C})$, as above.
Then we have the spectral sequences
\begin{equation}\label{ss for C} E_1^{s,t} \cong H(\Sigma^t C_s) \Rightarrow H(\Sigma^{t-s} Tot (C_{\bullet}))
\end{equation}
and
\begin{equation}\label{ss for D} E_1^{s,t} \cong H(\Sigma^t D_s) \Rightarrow H(\Sigma^{t-s} Tot (D_{\bullet})).
\end{equation}
The map $f_{\bullet}$ induces a map from the $E_1$-page of spectral sequence~\ref{ss for C} to the $E_1$-page of spectral sequence~\ref{ss for D}. This map
commutes with the differential $d_1$. 

Suppose $k>2$ and the Toda brackets $T(N,i; f_{\bullet})$ vanish 
for all pairs of integers $(N,i)$ with $2\leq N < k$ and $n\leq i$ and $N+i < n+k$. Then the map $f_{\bullet}$ induces a map from the $E_r$-page 
of spectral sequence~\ref{ss for C} to the $E_r$-page of spectral sequence~\ref{ss for D} for every $r< k$. This map
commutes with the differential $d_r$. 
\end{theorem}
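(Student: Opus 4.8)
The plan is to convert the hypothesis on Toda brackets into an honest, if only partially-defined, map of geometric realization towers, and then to feed that into the standard functoriality of the exact-couple construction. Recall that each of the spectral sequences~\ref{ss for C} and~\ref{ss for D} is obtained by applying a homotopy-invariant functor $H$ (either $[-,S]$ or $[S,-]$) to a tower of homotopy cofiber sequences --- diagram~\ref{infinite geom realization tower for ss} or its dual, or, after truncation at level $k$, diagrams~\ref{geom realization tower for ss} and~\ref{dual geom realization tower for ss} --- thereby producing an exact couple $(D^{1},E^{1},i,j,k)$ whose $r$-th derived couple carries the $E_r$-page and whose differential is $d_r=j\circ i^{-(r-1)}\circ k$. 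Two general facts will be used: (i) a morphism of towers of homotopy cofiber sequences, even one whose squares commute only up to homotopy, induces after applying $H$ a genuine morphism of exact couples of abelian groups, hence a morphism of all the associated $E_r$-pages commuting with every $d_r$, because $H$ factors through $Ho(\mathcal C)$ and sends homotopies to equalities; and (ii) evaluated at a fixed bidegree, the formula $d_r=j\circ i^{-(r-1)}\circ k$ involves only $r+1$ consecutive stages of the tower, so that $E_r$ and $d_r$ at that bidegree are already determined, functorially, by the sub-diagram consisting of the objects $Tot(Gr^{j}_{i}\mathcal F C_{*})$ with $j\le r$ (and $i$ in a window of width $O(r)$), the structure maps, and the maps $\Sigma^{i}C_{i}\to Tot(Gr^{j}_{i}\mathcal F C_{*})$. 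Equivalently: for a given bidegree one may replace the infinite tower by the level-$k$ truncation for a suitable $n$, and the two spectral sequences agree on $E_r$ there for all $r<k$.

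The case $r=1$ requires no Toda data. We have $E_1^{s,t}\cong H(\Sigma^{t}C_{s})$ and the map of $E_1$-pages is $H(\Sigma^{t}f_{s})$ (in the opposite direction when $H=[-,S]$). The differential $d_1$ is, up to the ambient suspension, $H$ applied to an alternating sum of face maps, so commutativity of the map with $d_1$ is exactly the assertion that $f_{\bullet}$ is a simplicial map in $Ho(\mathcal C)$, i.e.\ the homotopy-commutativity of the squares in diagram~\ref{diag:diag4.3.2}. This proves the first statement.

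For the second statement, assume $k>2$ and that the Toda brackets $T(N,i;f_{\bullet})$ vanish for all $(N,i)$ with $2\le N<k$, $n\le i$, $N+i<n+k$. By Theorem~\ref{main thm}, and concretely by the construction yielding the partially-defined map of geometric realization towers in diagram~\ref{general map of geom realization towers}, this vanishing is precisely what is needed to produce compatible maps $Tot(Gr^{j}_{i}\mathcal F C_{*})\to Tot(Gr^{j}_{i}\mathcal F D_{*})$ for all $j\le k$ in the relevant range of $i$, each constructed as the morphism induced on a homotopy cofiber, so that every square of the resulting map of towers --- including the squares through which the connecting maps run --- commutes in $Ho(\mathcal C)$, and so that the induced maps on $Gr^{1}$-subquotients are the $f_{m}$. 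Applying $H$ to this map of towers (or of the level-$k$ truncations) and invoking fact (i) gives a morphism of exact couples compatible with $i,j,k$; by fact (ii) the passage to derived couples then produces the desired morphism from the $E_r$-page of~\ref{ss for C} to the $E_r$-page of~\ref{ss for D} (with the direction reversed when $H=[-,S]$), commuting with $d_r$, for every $r<k$. The restriction $r<k$ is exactly the range of $r$ for which, at the bidegrees in play, $d_r$ is computed from the stages $Tot(Gr^{\le k}_{\bullet}\mathcal F C_{*})$ of the tower, all of which receive the induced map by Theorem~\ref{main thm}.

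The main obstacle is not conceptual but combinatorial: one must verify that the window $2\le N<k$, $n\le i$, $N+i<n+k$ is exactly the set of Toda brackets whose vanishing lets Theorem~\ref{main thm} define the map of towers on the stages that the formula $d_r=j\circ i^{-(r-1)}\circ k$ with $r<k$ reaches at the relevant bidegrees --- the same indexing bookkeeping already performed in the proof of Theorem~\ref{main thm}, now transported across the exact couple. The one point genuinely worth flagging is the homotopy-theoretic slack in Theorem~\ref{main thm}: the map it supplies on a $Gr^{j}$-object is only well-defined up to the indeterminacy of a cofiber and its squares commute only up to homotopy; this is harmless here precisely because everything is pushed through $H$, which is homotopy-invariant, so the resulting diagrams of abelian groups commute strictly and the choices disappear.
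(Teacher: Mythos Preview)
Your proposal is correct and reaches the same conclusion by a somewhat different, more modular route than the paper.

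The paper argues by an induction that is really an explicit unwinding of the differential: it identifies $E_k^{s,t}$ as a subquotient of $H(\Sigma^t C_s)$, writes $d_k$ as the composite
\[
E_k^{s,t}\longrightarrow H(\Sigma^{t-s}Tot\, F_{s+k-1}C_\bullet)\longrightarrow E_k^{s+k,t+k-1},
\]
and then checks by hand that this composite commutes with the map induced by $f_\bullet$ precisely when the square
\[
\xymatrix{
\Sigma^{t+k-1}C_{s+k}\ar[r]\ar[d]_{\Sigma^{t+k-1}f_{s+k}} & \Sigma^{t-s}Tot\,F_{s+k-1}C_\bullet\ar[d]\\
\Sigma^{t+k-1}D_{s+k}\ar[r] & \Sigma^{t-s}Tot\,F_{s+k-1}D_\bullet
}
\]
homotopy-commutes, which is exactly the vanishing of $T(k,s;f_\bullet)$. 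You instead invoke Theorem~\ref{main thm} once to manufacture a map of (truncated) geometric realization towers, observe that after applying $H$ this becomes an honest morphism of exact couples, and then appeal to the standard functoriality of derived couples to get compatibility with every $d_r$ for $r<k$ in one stroke. Your approach separates the homotopy-theoretic content (already packaged in Theorem~\ref{main thm}) from the purely algebraic spectral-sequence machinery, and handles all $r<k$ simultaneously rather than one differential at a time; the paper's approach, by contrast, makes the dependence on the individual bracket $T(k,s;f_\bullet)$ more visible at each stage. Your final paragraph correctly flags the one subtlety---that the tower map from Theorem~\ref{main thm} is only defined up to cofiber indeterminacy and commutes only up to homotopy---and correctly explains why this is harmless once $H$ is applied.
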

\begin{proof}
That $f_{\bullet}$ always induces a map on the $E_1$-pages of the spectral sequences is clear from the fact that the differential $d_1$ in these spectral
sequences is precisely the differential in the alternating sum chain complex object for $C_{\bullet}$ or $D_{\bullet}$.

For the remaining claim in the statement of the theorem, we work by induction.
Suppose $k>2$ and the Toda brackets $T(N,i; f_{\bullet})$ vanish 
for all pairs of integers $(N,i)$ with $2\leq N < k$ and $n\leq i$ and $N+i < n+k$. Furthermore, suppose we know that this implies that $f_{\bullet}$
induces a map, for all $r < k-1$, from the $E_r$-term of the cohomological spectral sequence~\ref{ss for C} to the $E_r$-term of the cohomological spectral 
sequence~\ref{ss for D}. We want to know that we then get an induced map on the $E_k$-terms which commutes with the differentials.
(We work with the cohomological spectral sequences, but the proof for the homological spectral sequences in strictly dual.)
Since the inductive hypothesis implies we have a well-defined map on the $E_{k-1}$-terms commuting with the differentials, on passing to cohomology 
we get a well-defined map of $E_k$-terms, and we must check that it commutes with the differentials. But $E_k^{s,t}$ in spectral sequence~\ref{ss for C}
is a subquotient of $H(\Sigma^t C_s)$ in which every element's image under the boundary map $H(\Sigma^t C_s)\rightarrow H( \Sigma^{t-s} Tot  F_sC_{\bullet})$ 
lies in the image of the map $H(\Sigma^{t-s} Tot  F_{s+k-1}C_{\bullet})\rightarrow H(\Sigma^{t-s} Tot  F_sC_{\bullet})$, and the differential
$d_k^{s,t}$ is just the composite
\[ E_k^{s,t}\rightarrow H(\Sigma^{t-s} Tot  F_{s+k-1}C_{\bullet})\rightarrow E_k^{s+k, t+k-1} \]
where the right-hand map is induced by the map $\Sigma^{t+k-1} C_{s+k}\rightarrow \Sigma^{t-s}Tot  F_{s+k-1} C_{\bullet}$.
The vanishing of $T(k,s; f_{\bullet})$ is precisely what we need in order to know that the square
\[ \xymatrix{ \Sigma^{t+k-1} C_{s+k}\ar[r]\ar[d]_{\Sigma^{t+k-1}f_{\bullet}} & \Sigma^{t-s}Tot  F_{s+k-1} C_{\bullet} \ar[d] \\
\Sigma^{t+k-1} D_{s+k}\ar[r]  &\Sigma^{t-s}Tot  F_{s+k-1} D_{\bullet} }\]
homotopy-commutes and hence that the induced map on $E_k$-pages commutes with the $d_k$ differentials.
\end{proof}

\section{Non-triviality of the obstruction}
\vskip.2in

For an algebra $A$, we write $CH_*(A)$ resp.\ $CC_*(A)$ for the Hochschild resp.\ cyclic complex of $A$, with conventions for the differentials and cyclic structure following that in [L]; $HH_*(A)$ resp.\ $HC_*(A)$ are their respective homology groups.
When $A$ is the complex group algebra $\mathbb C[\pi]$, there are well-known decompositions of $CH_*(\mathbb C[\pi])$ and $CC_*(\mathbb C[\pi])$ as direct sums of subcomplexes, indexed on $<\pi> =$ the set of conjugacy classes of $\pi$, which induce corresponding decompositions in homology:
\begin{gather*}
CH_*(\mathbb C[\pi])\cong \bigoplus_{<x>\in <\pi>} CH_*(\mathbb C[\pi])_{<x>}\\
CC_*(\mathbb C[\pi])\cong \bigoplus_{<x>\in <\pi>} CC_*(\mathbb C[\pi])_{<x>}\\
HH_*(\mathbb C[\pi])\cong \bigoplus_{<x>\in <\pi>} HH_*(\mathbb C[\pi])_{<x>}\\
HC_*(\mathbb C[\pi])\cong \bigoplus_{<x>\in <\pi>} HC_*(\mathbb C[\pi])_{<x>}
\end{gather*}

Moreover, for each conjugacy class associated to an element of infinite order (or {\it non-elliptic class}), there are isomorphisms
\begin{eqnarray}
CH_*(\mathbb C[G])_{<x>}\cong C_*(BC_x;\mathbb C)\label{eqn:conj1}\\
CC_*(\mathbb C[G])_{<x>}\cong  C_*(B(C_x/(x));\mathbb C)\label{eqn:conj2}
\end{eqnarray}
where $x$ is element representing the conjugacy class $<x>$, $C_x$ denotes the centralizer of $x\in\pi$ and $(x)\subset C_x$ the infinite cyclic subgroup of $C_x$ generated by $x$. This identification, due to Burghelea [B], has been fundamental in understanding the structure of the Hochschild and cyclic homology groups of the group algebra. The isomorphism in (\ref{eqn:conj2}) arises from the isomorphism in (\ref{eqn:conj1}), which holds for all conjugacy classes. However, this identification  involves a choice of element $x$ among the set of elements conjugate to $x$; as we shall see, it is impossible to make this choice in a way compatible, up to higher coherence homotopies, with respect to a collection of homomorphisms between two groups.
\vskip.2in

We recall how these equivalences are constructed. Given $x\in G$, write $S_{<x>}$ for the subset of elements in $G$ conjugate to $x$. There is a natural action of $G$ on $S_{<x>}$ given by $g\circ y := g^{-1}yg$. We write $N^{cy}(G)$ for the cyclic bar construction on $G$; this is the cyclic simplicial set with
\begin{gather*}
N^{cy}(G)_n = G^{n+1};\\
\partial_i(g_0,\dots,g_n) = (g_0,\dots,g_i g_{i+1},\dots,g_n),\quad 0\le i\le n-1,\\
\partial_n(g_0,\dots,g_n) = (g_ng_0,g_1,\dots,g_{n-1}),\\
s_j(g_0,\dots,g_n) = (g_0,\dots,g_j,1,g_{j+1},\dots,g_n)
\end{gather*}
and cyclic structure given by
\[
t_{n}(g_0,\dots,g_n) = (g_n,g_0,\dots,g_{n-1})
\]
There is a functorial equivalence
\[
C_*(N^{cy}(G);\mathbb C)\cong CH_*(\mathbb C[G])
\]
Moreover, the decomposition of $CH_*(\mathbb C[G])$ into summands indexed on conjugacy classes arises from the decomposition of $N^{cy}(G)$ into a disjoint sum of path components
\[
N^{cy}(G)\cong \underset{<x>\in <G>}{\coprod} N^{cy}(G)_{<x>}
\]
where for each $<x>$, $N^{cy}(G)_{<x>}$ is the simplicial subset of $N^{cy}(G)$ given by
\[
(N^{cy}(G)_{<x>})_n := \{(g_0,\dots,g_n)\ |\ g_0g_1\cdot\dots\cdot g_n\in S_{<x>}\}
\]
Denoting the non-homogeneous bar resolution of $G$ by $EG$, there is an isomorphism of simplicial sets (compare [L, Prop. 7.4.2])
\begin{eqnarray}
S_{<x>}\underset{G}{\times} EG\overset{\cong}{\longleftrightarrow} N^{cy}(G)_{x},\label{eqn:isom1}\\
(g_1g_2\cdot\dots\cdot g_ng_0;[g_1,g_2,\dots,g_n])\leftrightarrow (g_0,g_1,\dots,g_n)\nonumber
\end{eqnarray}
which sum together over conjugacy classes to induce a simplicial isomorphism
\begin{equation}\label{eqn:isom1global}
S(G)\underset{G}{\times} EG\overset{\cong}{\longleftrightarrow} N^{cy}(G)
\end{equation}
where $S(G) = G$, but with $G$-action given by $g\circ s = g^{-1}sg,s\in S(G)$. Next, for any given element $y\in S_{<x>}$, there is an equivariant isomorphism of $G$-sets
\begin{equation}\label{eqn:isom2}
p_y: C_{y}\backslash G\overset{\cong}{\longrightarrow} S_{<x>},\quad (C_y)g\mapsto g^{-1}yg
\end{equation}
This in turn induces an isomorphism of simplicial sets
\begin{equation}\label{eqn:isom3}
(C_{y}\backslash G)\underset{G}{\times} EG\overset{\cong}{\longleftrightarrow} S_{<x>}\underset{G}{\times} EG
\end{equation}
Finally, the inclusion $C_{y}\hookrightarrow G$ induces a weak equivalence
\begin{equation}\label{eqn;isom4}
BC_{y} = (C_{y}\backslash C_{y})\underset{C_{y}}{\times} EC_{y}\overset{\simeq}{\hookrightarrow} (C_{y}\backslash G)\underset{G}{\times} EG
\end{equation}
The composition
\[
BC_y\to N^{cy}(G)_{<x>}
\]
is therefore a weak equivalence, as well as a map of cyclic simplicial sets, where the cyclic structure on the left is given by the \lq\lq twisted nerve\rq\rq construction detailed in [L,\S 7.3.3] (in the notation of that source, we would write $B(C_y,y)$ instead of just $BC_y$). From this cyclic simplicial weak equivalence, one derives the usual identification of the non-elliptic summands in $CC_*(\mathbb C[G])$ as in (\ref{eqn:conj2}).
\vskip.2in

With respect to naturality, a problem with this construction occurs in (\ref{eqn:isom2}) and (\ref{eqn:isom3}) where the choice of $y$ is made, since this choice cannot be done in a functorial way unless $<x> = <id>$. 
Fixing a choice of $y\in S_{<x>}$ amounts to choosing a basepoint for the non-basepointed discrete space $S_{<x>}$. In what follows, a {\it free simplicial group} refers to a simplicial group which is degreewise free.

\begin{lemma}\label{lemma:one} Suppose $(\Gamma\hskip-.03in_{\bullet})$ is a free simplicial group. Then there is a natural map of graded simplicial sets
\begin{equation}
\left\{[n]\mapsto \underset{<x>\in <\Gamma_n>}{\coprod} BC_{<x>}\right\}_{n\ge 0}\overset{F(\Gamma\hskip-.03in_{\bullet})}{\longrightarrow}
\left\{[n]\mapsto \underset{<x>\in <\Gamma_n>}{\coprod} N^{cy}(\Gamma_n)_{<x>}\right\}_{n\ge 0}
\end{equation}
which, for each $n$ and $<x>\in<\Gamma_n>$, restricts to a weak equivalence of cyclic simplicial sets
\[
BC_{<x>}\overset{\simeq}{\hookrightarrow} N^{cy}(\Gamma_n)_{<x>}
\]
where $C_{<x>}$ is a canonical model for the centralizer subgroup $C_{x}$. Both the domain and range are simplicial spaces (i.e., bisimplicial sets), and for each iterated simplicial map $\lambda:\Gamma_n\to \Gamma_m$, there is a diagram
\begin{equation}\label{diag:diag1}
\xymatrix{
\ar@{}[dr]| *+[F-:<200pt>]{\simeq}
 \underset{<x>\in <\Gamma_n>}{\coprod} BC_{<x>}\ar[d]^{\lambda_*}\ar@{^{(}->}[r]^(0.6){\simeq} &
N^{cy}(\Gamma_n)\ar[d]^{\lambda_*}\\
 \underset{<x>\in <\Gamma_m>}{\coprod} BC_{<y>}\ar@{^{(}->}[r]^(0.6){\simeq} &
N^{cy}(\Gamma_{m})
}
\end{equation}
which commutes up to canonical homotopy
\end{lemma}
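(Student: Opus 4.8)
The plan is to build the map $F(\Gamma_\bullet)$ directly out of the simplicial-set constructions recalled above, being careful to make every choice ``at once'' rather than conjugacy-class-by-conjugacy-class, so that functoriality in the external simplicial direction is automatic and coherence is controlled. First I would observe that the global isomorphism~\eqref{eqn:isom1global}, $S(\Gamma_n)\times_{\Gamma_n} E\Gamma_n \cong N^{cy}(\Gamma_n)$, is strictly natural in $\Gamma_n$: the only non-natural step in the classical Burghelea construction is the passage~\eqref{eqn:isom2}--\eqref{eqn:isom3} from the orbit set $S_{<x>}$ to a coset space $C_y\backslash\Gamma_n$ via a chosen representative $y$. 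So the entire burden of the proof is to choose, for each $n$, a representative in each conjugacy class of $\Gamma_n$ in a way that is compatible with all iterated face and degeneracy maps $\lambda\colon \Gamma_n\to\Gamma_m$ up to coherent homotopy, and to package the resulting centralizers into a ``canonical model'' $C_{<x>}$. This is exactly where degreewise freeness of $\Gamma_\bullet$ is used.

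Next I would make the choice of representatives explicit. Since each $\Gamma_n$ is a free group, I would fix a basis and use it to define, for every element $g\in\Gamma_n$, a canonical conjugacy-class representative --- e.g.\ the cyclically reduced word of minimal length that is lexicographically least among its cyclic rotations (a ``cyclic normal form''). Write $w_n\colon \Gamma_n\to\Gamma_n$ for the resulting set map $g\mapsto w_n(g)$, and $c_n(g)\in\Gamma_n$ for a chosen conjugating element with $c_n(g)^{-1}\,g\,c_n(g)=w_n(g)$. Then $C_{<x>}$ is defined to be the centralizer of $w_n(x)$, and the isomorphisms~\eqref{eqn:isom2}--\eqref{eqn:isom3} become \emph{canonical} (they depend on $\Gamma_n$ and its basis, not on an arbitrary choice), so composing with~\eqref{eqn:isom1}, \eqref{eqn:isom1global} and the inclusion-induced weak equivalence~\eqref{eqn;isom4} produces the asserted weak equivalences $BC_{<x>}\hookrightarrow N^{cy}(\Gamma_n)_{<x>}$, assembled over $<x>$ into $F(\Gamma_\bullet)$ in each external degree $n$. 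The cyclic structure is the twisted-nerve cyclic structure of [L,\S7.3.3], transported along these canonical isomorphisms, and since every isomorphism in the chain is a map of cyclic simplicial sets (the twisting parameter being precisely the chosen representative $w_n(x)$), the restriction is a weak equivalence of cyclic sets.

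For the commuting-up-to-homotopy square~\eqref{diag:diag1}, note that for a group homomorphism $\lambda\colon\Gamma_n\to\Gamma_m$ the square with $N^{cy}$ on the right strictly commutes on the $S(\Gamma)\times_\Gamma E\Gamma$ models, because~\eqref{eqn:isom1global} is natural. The only discrepancy is on the left-hand vertical: $\lambda$ sends the chosen representative $w_n(x)$ to $\lambda(w_n(x))$, which is conjugate to, but generally not equal to, the chosen representative $w_m(\lambda(x))$ of its class in $\Gamma_m$; conjugation by the element $u:=c_m(\lambda(w_n(x)))$ identifies them, and conjugation by $u$ also identifies $C_{\lambda(w_n(x))}$ with $C_{<\lambda(x)>}=C_{w_m(\lambda(x))}$. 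Thus $\lambda_*$ on the coproduct of classifying spaces is, up to an inner automorphism induced by a \emph{specified} element $u=u(\lambda,x)$, the ``obvious'' map; and an inner automorphism induces a map homotopic to the identity on $B(-)$, via the standard natural (conjugation) homotopy, which here is canonical because $u$ is canonical. Tracking these canonical conjugation homotopies, and checking they agree (up to the inner-automorphism relations $u(\lambda\mu,x)$ vs.\ $u(\lambda,\mu(x))\cdot\lambda(u(\mu,x))$) under composition of iterated simplicial maps, gives the ``canonical homotopy'' filling~\eqref{diag:diag1}.

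The main obstacle is the coherence bookkeeping in the last paragraph: one must verify that the chosen conjugating elements $c_n$ satisfy, for composable $\lambda,\mu$, a cocycle-type compatibility (up to centralizer-valued error) so that the canonical conjugation homotopies compose associatively and the filling in~\eqref{diag:diag1} is genuinely natural --- not merely a homotopy existing for each $\lambda$ separately. Since the paper only claims commutativity ``up to canonical homotopy'' for each single iterated map $\lambda$ (and will extract the finitely many homotopies it actually needs in Section~3), it suffices to record the explicit conjugation homotopy $h_\lambda$ built from $u(\lambda,x)$ and note its naturality in the data; the deeper $\infty$-coherence is not required here, and indeed the point of the paper is that it \emph{fails}, which is what forces the Toda brackets of Theorem~\ref{main thm} to be nonzero in the application.
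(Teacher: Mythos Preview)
Your overall strategy---use the natural global isomorphism~\eqref{eqn:isom1global}, choose conjugacy-class representatives to pin down the centralizers, and control the discrepancy under each $\lambda$ by conjugation homotopies---is the same as the paper's. But you omit the one structural fact that actually makes the construction work, and your attempt to replace it by explicit lexicographic choices does not close the gap.

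The paper's key point is this: in a free group the centralizer of any nontrivial element is infinite cyclic, hence abelian. If $y$ and $y'$ are conjugate, any conjugator $h$ gives an isomorphism $C_y\xrightarrow{\cong}C_{y'}$, but two such differ by an inner automorphism of $C_y$; so a \emph{canonical} isomorphism exists precisely when $C_y$ is abelian. For free groups there is even a preferred generator (the shortest root $x_y$ of $y$), and the canonical isomorphism sends $x_y\mapsto x_{y'}$. Gluing the $C_y$ along these canonical isomorphisms produces $C_{<y>}$, and this is why the face and degeneracy maps on $\{[n]\mapsto\coprod_{<x>}BC_{<x>}\}$ satisfy the simplicial identities \emph{strictly}: any two composites of conjugation-twisted structure maps land in the same abelian target, and conjugation by anything acts trivially there. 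That is what makes the domain an honest bisimplicial set, as the lemma asserts.

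Your construction instead fixes a basis in each degree, picks a lexicographically minimal cyclic word $w_n(x)$ as representative, and sets $C_{<x>}:=C_{w_n(x)}$. That defines the object in each external degree, but your induced structure maps $C_{w_n(x)}\to C_{w_{n-1}(\partial_i x)}$ involve conjugation by your chosen $c$'s, and you never check that relations like $\partial_i\partial_j=\partial_{j-1}\partial_i$ hold on the nose. They do hold---but only because the targets are abelian, which is the paper's argument, not yours. Your appeal to ``inner automorphisms are homotopic to the identity on $B(-)$'' is a homotopy-level statement; it is adequate for filling the single square~\eqref{diag:diag1}, but not for establishing that the domain is a strict simplicial object. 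Likewise, in your last paragraph you flag the cocycle compatibility of the $c_n$ as ``bookkeeping'' that is ``not required here''; for the bisimplicial-set claim it \emph{is} required, and it is discharged by the abelianness of centralizers rather than by any cleverness in choosing the $c_n$. (Note too that a free simplicial group need not come with compatible bases across degrees, so your lex-minimal $w_n$ are not related from one $n$ to the next; this is harmless once one knows the centralizers are abelian, but it underlines that the choices themselves carry no coherence.)

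In short: insert the observation that centralizers in free groups are infinite cyclic, and use it to make the identifications $C_y\cong C_{y'}$ canonical. Everything else in your outline then goes through, and in fact simplifies---you no longer need canonical $w_n$ or $c_n$ at all.
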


\begin{proof}  For each $n\ge 0$ and conjugacy class $<x>\in <\Gamma_n>$, fix a choice of basepoint $x$ for $S_{<x>}$. We consider first the problem of constructing a canonical representative $C_{<y>}$ for the centralizer of an element $y$ when $<y>\ne <id>$. If $<y> = <y'>$, then choosing $h$ such that $y' = y^h$ leads to an isomorphism $C_y\overset{\phi_h}{\underset{\cong}{\longrightarrow}} C_{y'}$ given by $\phi_h(x) = x^h$. However, this isomorphic identification between the two centralizers is determined only up to precomposition with an inner automorphism of $C_y$, as the choice of $h$ is only determined up to right multiplication by an element of $C_y$. Thus a necessary and sufficient condition for $C_{<y>}$ to exist is that the group of inner automorphisms of $C_y$ is trivial; that is, $C_y$ should be abelian when $<y>\ne <id>$. When $<y>\in <F>$, $F$ a free group, $C_y$ is infinite cyclic for $y\ne id$, so the condition holds. In fact, for $y\ne id$, the centralizer $C_y$ is the infinite cyclic subgroup of $F$ generated by $x_y$, where $x_y\in F$ is uniquely defined as the element of shortest length for which $y$ can be written as a power of $x_y$, and the canonical isomorphism between $C_y$ and $C_{y'}$ is the one that sends $x_y$ to $x_{y'}$. Identifying $C_y$ with $C_{y'}$ via this canonical isomorphism for conjugate elements $y$ and $y'$ produces our canonical centralizer group $C_{<y>}$. We can extend this description to all conjugacy classes in $<F>$ by setting $C_{<1>} = C_1 = F$. 
\vskip.1in

There are maps of graded simplicial sets
\begin{equation}\label{eqn:nearend}
\left\{[n]\mapsto \hskip-.2in\underset{<x>\in <\Gamma_n>}{\coprod} \hskip-.1in BC_{<x>}\right\}_{n\ge 0}\hskip-.1in\overset{\simeq}{\longrightarrow}
\left\{[n]\mapsto \hskip-.2in\underset{<x>\in <\Gamma_n>}{\coprod}\hskip-.1in C_{x}\backslash\Gamma_n\underset{\Gamma_n}{\times} E\Gamma_n\right\}_{n\ge 0}
\hskip-.1in\underset{p_*}{\overset{\cong}{\longrightarrow}}
\left\{[n]\mapsto \hskip-.2in\underset{<x>\in <\Gamma_n>}{\coprod}\hskip-.1in N^{cy}(\Gamma_n)_{<x>}\right\}_{n\ge 0}
\end{equation}
\vskip.1in

Where the first map in (\ref{eqn:nearend}) is induced on each summand by the canonical isomorphism $C_{<x>}\cong C_{x}$ described above. For each $\partial_i:\Gamma_n\to\Gamma_{n-1}$ and $<x>\in<\Gamma_n>$ there is a commuting diagram
\begin{equation}\label{diag:diag2}
\xymatrix{
BC_{<x>}\ar[d]^{(\partial_i)_*}\ar@{^{(}->}[r]^(0.4){\simeq} & C_{x}\backslash\Gamma_n\underset{\Gamma_n}{\times} E\Gamma_n\ar[r]^{\cong}_{p_{x}}\ar[d]^{(\partial_i^{h_{x,i}})_*}&
N^{cy}(\Gamma_n)_{<x>}\ar[d]^{(\partial_i^{h_{x,i}})_*}\\
BC_{<\partial_i(x)>}\ar\ar@{^{(}->}[r]^(0.375){\simeq} & C_{\ov{x}_i}\backslash\Gamma_{n-1}\underset{\Gamma_{n-1}}{\times} E\Gamma_{n-1}\ar[r]^{\cong}_{p_{\ov{x}_i}} &
N^{cy}(\Gamma_{n-1})_{<\partial_i(x)>}
}
\end{equation}
where $\ov{x}_i$ denotes the basepoint of $S_{<\partial_i(x)>}\subset\Gamma_{n-1}$, and $h_{x,i}$ satisfies the equation $(\partial_i(x))^{h_{x,i}} = \ov{x}_i$. A similar diagram exists for degeneracy maps. Because of the conjugation by the $\{h_{x,i}\}$ the middle and right-most terms in the sequence of (\ref{eqn:nearend}), equipped with conjugated face and degeneracy maps, may not be bisimplicial sets, but simply graded simplicial sets. However, from the preceding construction used in the definition of $C_{<x>}$, we see that the simplicial identities for compositions of face maps will be satisfied when restricted to the image of the inclusion of $BC_{<x>}$. The same argument applies for the other identities between compositions of face and degeneracy maps, and moreover the simplicial structure is independent of the particular choice of $\{h_{x,i}\}$. The result is a map of graded simplicial sets as indicated, which in each degree is a map of cyclic simplicial sets by [L, Prop. 7.4.5]. Iterating this construction for both face and degeneracy maps, one concludes that for any morphism $\alpha\in Hom_{\Delta}([m],[n])$ and conjugacy class $<x>$, there is a strictly commuting diagram
\begin{equation}\label{diag:diag3}
\xymatrix{
BC_{<x>}\ar[d]^{\Gamma(\alpha)_*}\ar@{^{(}->}[r]^(0.4){\simeq} & C_{x}\backslash\Gamma_n\underset{\Gamma_n}{\times} E\Gamma_n\ar[r]^{\cong}_{p_{x}}\ar[d]^{(\Gamma(\alpha)^{h_{x,\alpha}})_*}&
N^{cy}(\Gamma_n)_{<x>}\ar[d]^{(\Gamma(\alpha)^{h_{x,\alpha}})_*}\\
BC_{<y>}\ar@{^{(}->}[r]^(0.375){\simeq} & C_{\ov{y}}\backslash\Gamma_{m}\underset{\Gamma_{m}}{\times} E\Gamma_{m}\ar[r]^{\cong}_{p_{\ov{y}}} &
N^{cy}(\Gamma_{m})_{<y>}
}
\end{equation}
\vskip.1in
where $\Gamma(\alpha):\Gamma_n\to\Gamma_m$ is the homomorphism corresponding to $\alpha$, $y = \Gamma(\alpha)(x)$, $\ov{y}$ is the basepoint of $S_{<y>}$, and $h_{x,\alpha}\in\Gamma_m$ satisfies the equation $\left(\Gamma(\alpha)(x)\right)^{h_{x,\alpha}} = \ov{y}$. As conjugation by any element of
$\Gamma_m$ induces a self map of $N^{cy}(\Gamma_m)$ canonically homotopic to the identity, we conclude the existence of a canonically homotopy commuting diagram
\begin{equation}\label{diag:diag4}
\xymatrix{
\ar@{}[dr]| *+[F-:<200pt>]{\simeq}
 \underset{<x>\in <\Gamma_n>}{\coprod} BC_{<x>}\ar[d]^{\Gamma(\alpha)_*}\ar@{^{(}->}[r]^(0.6){\simeq} &
N^{cy}(\Gamma_n)\ar[d]^{\Gamma(\alpha)_*}\\
 \underset{<x>\in <\Gamma_m>}{\coprod} BC_{<y>}\ar@{^{(}->}[r]^(0.6){\simeq} &
N^{cy}(\Gamma_{m})
}
\end{equation}
\end{proof}
\vskip.4in
 For a free group $F$, let
\[
\wt{CC}_*(\mathbb C[F]) := C_*(BF;\mathbb C)\otimes CC_*(\mathbb C) oplus
\underset{<id>\ne <x>\in <F>}{\bigoplus} C_*(B(C_{<x>}/(<x>));\mathbb C)
\]
where $C_{<x>}/(<x>)$ denotes the canonical model for the centralizer of $x$ divided by the subgroup $(x)$. This chain complex is simply the cyclic chain complex (in char. $0$) associated to the cyclic simpicial set $\underset{<x>\in <F>}{\coprod} BC_{<x>}$. As we have seen in the proof of the previous Lemma, the association $F\mapsto \wt{CC}_*(\mathbb C[F])$ defines a functor $(fr.gps)\to {\cal C}$ from the category of free groups to $\cal C$.
\vskip.2in

Assume $m\ge 2$, $m$ even. Form a free simplicial group $\Gamma(m)_{\bullet}$ by setting $\Gamma(m)_j = \{id\}$ for $j < m-1$, $\Gamma(m)_{m-1} = \mathbb Z$ on generator $\iota_{m-1}$, and $\Gamma_{m+k}$ the free group on generators $s_{\alpha}(\iota_{m-1})$, where $s_{\alpha}$ ranges over iterated degeneracies from dim.\ $m-1$ to dim.\ $m+k$ when $k\ge 0$. This is a simplcial group model for $\Omega S^m$. Let $A(m)_{\bullet}$ be the abelianization of $\Gamma(m)_{\bullet}$, so that $|A(m)_{\bullet}|\simeq K(\mathbb Z,m-1)$. As $m-1$ is odd, the simplicial group homomorphism $\Gamma(m)_{\bullet}\surj A(m)_{\bullet}$ induced by abelianization is a rational homotopy equivalence, and the map of simplicial complex group algebras $\mathbb C[\Gamma(m)_{\bullet}]\to \mathbb C[A(m)_{\bullet}]$ a weak equivalence.
\vskip.2in

Define $C(m)_{*,\bullet}$, $D(m)_{*,\bullet}$ by
\begin{gather*}
C(m)_{*,\bullet} := \wt{CC}_*(\mathbb C[\Gamma(m)_{\bullet}])\\
D(m)_{*,\bullet} := CC_*(\mathbb C[\Gamma(m)_{\bullet}])
\end{gather*}
By the previous Lemma, both $C(m)_{*,\bullet}$ and $D(m)_{*,\bullet}$ are simplicial objects in $\cal C$, for which there is a homomorphism of graded complexes $\phi_{*,\bullet}: C(m)_{*,\bullet}\to D(m)_{*,\bullet}$ which is a quasi-isomorphism in each degree, and which commutes with face and degeneracy maps up to canonical chain homotopy. Degreewise inclusion of the summand indexed by $<id>$ induces  evident \lq\lq assembly maps\rq\rq
\begin{gather*}
H_*(K(\mathbb Z,m);\mathbb C) = H_*(B\Gamma(m)_{\bullet};\mathbb C)\to H_*(C(m)_{*,\bullet}),\\
H_*(K(\mathbb Z,m);\mathbb C) = H_*(B\Gamma(m)_{\bullet};\mathbb C)\to H_*(D(m)_{*,\bullet})
\end{gather*}

\begin{lemma} For all $m\ge 2$, $H_m(C(m)_{*,\bullet})\ne H_m(D(m)_{*,\bullet})$.
\end{lemma}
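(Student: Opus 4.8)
The plan is to compute both sides in degree $m$ and exhibit a discrepancy traceable to the cyclic homology of $\mathbb{Z}$. First I would identify $H_*(C(m)_{*,\bullet})$ and $H_*(D(m)_{*,\bullet})$ via the bisimplicial (or double-complex) structure: since $C(m)_{*,\bullet}$ is degreewise $\wt{CC}_*(\mathbb{C}[\Gamma(m)_j])$ and $D(m)_{*,\bullet}$ is degreewise $CC_*(\mathbb{C}[\Gamma(m)_j])$, and $\phi_{*,\bullet}$ is a degreewise quasi-isomorphism, there is no difference detectable degreewise; the difference can only come from how the totalization (= geometric realization) assembles the pieces, i.e.\ from the face-map incompatibilities of $C(m)_{*,\bullet}$ being only canonical \emph{homotopies} rather than strict equalities. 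So I would set up the homological spectral sequence of Thm.~\ref{thm:brackets-diff} (the $d_1$ being the alternating sum of simplicial faces) for both $C(m)_{*,\bullet}$ and $D(m)_{*,\bullet}$, with $H = H_*(-;\mathbb{C})$, converging to $H_*(Tot_* C(m)_{*,\bullet})$ resp.\ $H_*(Tot_* D(m)_{*,\bullet})$.

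Next I would pin down the $E_1$-terms. Because $\Gamma(m)_j$ is trivial for $j < m-1$ and free on degeneracies of $\iota_{m-1}$ for $j \geq m-1$, the relevant low-degree contributions to $H_m$ come from $\Gamma(m)_{m-1} = \mathbb{Z}$ and its first degeneracy $\Gamma(m)_m$. For $D(m)_{*,\bullet} = CC_*(\mathbb{C}[\Gamma(m)_\bullet])$, the Burghelea decomposition gives $CC_*(\mathbb{C}[\mathbb{Z}])_{<x>} \cong C_*(B(C_x/(x));\mathbb{C})$, and for $x = \iota_{m-1}^k$ with $k \neq 0$ one has $C_x = \mathbb{Z}$ and $C_x/(x)$ finite cyclic (or trivial when $k = \pm 1$), so the cyclic summands are small; meanwhile the $<id>$-summand is $C_*(B\mathbb{Z};\mathbb{C}) \otimes CC_*(\mathbb{C})$. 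For $C(m)_{*,\bullet} = \wt{CC}_*$, by definition the $<id>$-summand is again $C_*(B\mathbb{Z};\mathbb{C}) \otimes CC_*(\mathbb{C})$ but the non-identity summands are replaced by $C_*(B(C_{<x>}/(<x>));\mathbb{C})$ built from the \emph{canonical} centralizer model. Degreewise these agree under $\phi$, so I expect the $E_1$-pages (hence $E_2$-pages, since $d_1$ is the same combinatorial alternating sum) of the two spectral sequences to be isomorphic via $f_\bullet = [\phi_\bullet]$ in the range contributing to $H_m$. The whole point is then that the isomorphism of $E_2$-pages need \emph{not} extend to an isomorphism of $E_\infty$-pages: the obstruction is exactly the first nonvanishing Toda bracket $T(2, n; f_\bullet)$ of Prop.~\ref{map on gr^2}/Prop.~\ref{gr^3 map}, living in $[\Sigma C_{n+2}, D_n]$, which here becomes a class detected by $HC_*(\mathbb{C}) = HC_*(\mathbb{C}[\{1\}])$ — concretely the periodicity class $u \in HC_2(\mathbb{C})$ — tensored against $H_{m-1}(B\mathbb{Z};\mathbb{C})$ or an analogous term.

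So the key steps, in order, are: (1) write out $Tot_*$ of both bisimplicial chain complexes as honest double complexes and set up the two homological spectral sequences; (2) compute the $E_1$- and $E_2$-pages in total degree $\leq m$ using the Burghelea formulas (\ref{eqn:conj1})--(\ref{eqn:conj2}) and the explicit form of $\Gamma(m)_\bullet$ as a model for $\Omega S^m$, observing that $\phi_\bullet$ induces an isomorphism on these pages; (3) identify the first possibly-nonzero Toda bracket $T(2,n;\phi_\bullet)$ for the relevant $n$ and show it is detected by the nontrivial class in $HC_*(\mathbb{C})$ arising from the fact that the basepoint choice $y \in S_{<x>}$ in (\ref{eqn:isom2})--(\ref{eqn:isom3}) cannot be made compatibly with the simplicial operators $\partial_i$ — precisely the incoherence flagged in the discussion following (\ref{eqn;isom4}); (4) conclude that the $E_\infty$-pages differ in total degree $m$, hence $H_m(Tot_* C(m)_{*,\bullet}) \neq H_m(Tot_* D(m)_{*,\bullet})$. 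I would also use the assembly maps $H_*(K(\mathbb{Z},m);\mathbb{C}) \to H_*(C(m)_{*,\bullet})$, $H_*(K(\mathbb{Z},m);\mathbb{C}) \to H_*(D(m)_{*,\bullet})$ to fix a distinguished class in degree $m$ on each side against which to measure the discrepancy, and the rational equivalence $\Gamma(m)_\bullet \surj A(m)_\bullet$ to reduce the Hochschild-type computations to the abelian case $K(\mathbb{Z},m-1)$ where $HC_*$ and $HH_*$ of $\mathbb{C}[\mathbb{Z}^{\text{(simplicial)}}]$ are classically known (Loday--Quillen / the computation of $HC_*$ of polynomial and Laurent algebras).

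The main obstacle I expect is step (3): making rigorous the claim that the canonical-centralizer model $\wt{CC}_*$ genuinely differs from $CC_*$ after totalization, i.e.\ that the Toda bracket $T(2,n;\phi_\bullet)$ is actually nonzero and not killed by some lower-order correction. This requires tracking the conjugating elements $h_{x,i}$ of diagram~\ref{diag:diag2} around a composite of face maps and showing the resulting automorphism of $N^{cy}(\Gamma_m)_{<x>}$, while homotopic to the identity, carries a nontrivial homotopy — precisely a loop in the space of self-equivalences whose class pairs nontrivially with $HC_*$ of the finite cyclic (or trivial) quotient group. Concretely this reduces to a computation in $HC_*(\mathbb{C})$ versus $HH_*(\mathbb{C})$: the extra summand $CC_*(\mathbb{C})$ appearing in the $<id>$-part of $\wt{CC}_*$, when compared to how the full $CC_*(\mathbb{C}[\Gamma(m)_\bullet])$ organizes its periodicity, produces a degree-$2$ class that survives to $E_\infty$ on one side but supports or receives a differential on the other. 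Verifying that this differential (equivalently, this bracket) is nonzero is the crux; everything else is bookkeeping with the Burghelea decomposition and the geometric realization tower.
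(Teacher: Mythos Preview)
Your setup through the $E_2$-page is right: both spectral sequences share the same $E^2_{*,*}$-term, and the image of $\iota_m$ under the assembly map is the correct class to track. But from that point your strategy is inverted, and step~(4) does not follow from step~(3).

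The paper does \emph{not} prove the lemma by computing $T(2,n;\phi_\bullet)$ and then deducing a difference at $E_\infty$. Instead it computes $H_m$ on each side separately. For $C(m)_{*,\bullet}$ the argument is purely structural: by the definition of $\wt{CC}_*$, the internal differential $C(m)_{1,n}\to C(m)_{0,n}$ vanishes for every $n$, so the bicomplex splits as $C_{0,*}\oplus C_{*,*}/C_{0,*}$; in the associated spectral sequence every differential leaving that column is therefore zero, and the class $\iota_{1,m-1}\in E^2_{1,m-1}$ survives to $E^\infty$, giving $H_m(C(m)_{*,\bullet})\cong\mathbb{C}$. For $D(m)_{*,\bullet}$ one identifies $H_*(D(m)_{*,\bullet})\cong HC_*(\mathbb{C}[\Omega K(\mathbb{Z},m)])$ and appeals to the known fact that $HC_m(\mathbb{C}[\Omega K(\mathbb{Z},m)])=0$ for $m\ge 2$ (the generator of $HH_m$ lies in the image of $B$ in the Connes--Gysin sequence). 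No bracket is computed; the nonvanishing of the Toda bracket is the \emph{corollary} of this lemma via Theorem~\ref{thm:brackets-diff}, not its method of proof.

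Your plan has two concrete problems. First, the implication you invoke in step~(4) is not available: Theorem~\ref{thm:brackets-diff} says that vanishing of the brackets yields a map of $E_r$-pages, but nonvanishing of $T(2,n;\phi_\bullet)$ only obstructs the existence of a filtration-preserving map \emph{induced by $\phi_\bullet$}; it does not force the abutments, or even the $E_\infty$-pages, to be abstractly non-isomorphic. Second, step~(3) --- the direct computation of the bracket by chasing the conjugators $h_{x,i}$ --- is precisely the hard thing the whole setup is designed to avoid; you offer no argument for it beyond the heuristic that it should pair with the periodicity class in $HC_2(\mathbb{C})$. The idea you are missing is the splitting of the $\wt{CC}_*$ bicomplex, which makes the $C(m)$ side computable without ever touching the incoherence of the basepoint choices.
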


\begin{proof} Since the corresponding bicomplexes are positively graded in both coordinates, filtering by rows yields a strongly convergent spectral sequence
\[
\left\{ E^2_{p,q} := H_p(F_{*,q}) \Rightarrow H_{p+q}(F_{*,\bullet})\right\}
\]
for $F = C,D$. In fact, the spectral sequences for both $C_{*,\bullet}$ and $D_{*,\bullet}$ have the same $E^2_{*,*}$-term. In both cases, the image of the canonical generator $\iota_m\in H_m(K(\mathbb Z,m);\mathbb C)$ under the assembly map is represented at the $E^1$-level by the canonical generator $\iota_{1,m-1}\in E^1_{1,m-1}\cong HC_1(\mathbb C[\Gamma(m)_{m-1}]) = HC_1(\mathbb C[\mathbb Z])\cong \mathbb C$. Moreover, in both cases, this element survives to a non-zero element in $E^2_{1,n-1}$. Now the bicomplex $C(m)_{*,*}$ satisfies the property that for each $n$, $0 = d^0_{1,n}: C(m)_{1,n}\to C(m)_{0,n}$. In other words, as a bicomplex it can be written as a direct sum $C_{*,*} = C_{0,*}\oplus C_{*,*}/C_{0,*}$. In the corresponding homology spectral sequence, this forces all differentials originating on the $q=0$ line to be zero. In particular, for the spectral sequence converging to $H_*(C(m)_{*,\bullet})$, one has $0 = d^2_{0,m+1}:E^2_{0,m+1}\to E^2_{1,m-1}$, implying $\iota_{1,n-1}$ survives to a non-zero element in $E^3_{1,m-1} = E^{\infty}_{1,m-1}$, so that $H_m(C(m)_{*,\bullet})\cong\mathbb C$. On the other hand, in the spectral sequence converging to $H_*(D(m)_{*,\bullet})$, the element $\iota_{1,n-1}$ must be hit by the differential $d^2_{0,m+1}$. In fact, $H_*(D_{*,\bullet})\cong HC_*(\mathbb C[\Omega K(\mathbb Z,m)])$ by the above discussion (where $\Omega K(\mathbb Z,m)$ denotes any simplicial group rationally homotopy equivalent to $K(\mathbb Z,m)$). But for $m\ge 2$, $HC_m(\mathbb C[\Omega K(\mathbb Z,m)]) = 0$, with the canonical generator $0\ne \iota_m\in HH_m(\mathbb C[\Omega K(\mathbb Z,m)])$ lying in the image of the $B:HC_{m-1}(\mathbb C[\Omega K(\mathbb Z,m)])\to HH_m(\mathbb C[\Omega K(\mathbb Z,m)])$ in the Connes-Gysin sequence. 
\end{proof}

\begin{corollary} For each $m\ge 2$, the $(m+1)^{st}$ first-order Toda bracket associated to the homotopy chain map $C(m)_{*,\bullet}\to D(m)_{*,\bullet}$ is non-zero.
\end{corollary}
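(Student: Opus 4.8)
The plan is to argue by contradiction, playing the spectral-sequence reformulation of Theorem~\ref{thm:brackets-diff} off against the homology computation of the preceding Lemma. By that Lemma, $C(m)_{*,\bullet}$ and $D(m)_{*,\bullet}$ are simplicial objects in ${\cal C}$, and $\phi_{*,\bullet}\colon C(m)_{*,\bullet}\to D(m)_{*,\bullet}$ is a map of totalizable simplicial homotopy types which is a quasi-isomorphism in each simplicial degree, so Theorem~\ref{thm:brackets-diff} applies to it.

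First I would fix a sphere object $S$ in ${\cal C}$ and apply the homological functor $H=[S,-]$ to the geometric realization towers of $C(m)_{*,\bullet}$ and $D(m)_{*,\bullet}$, producing the homological spectral sequences $E^1_{s,t}\cong H(\Sigma^tC(m)_{*,s})\Rightarrow H(\Sigma^{t-s}Tot_*C(m)_{*,\bullet})$ and its analogue for $D(m)_{*,\bullet}$. After the evident reindexing --- matching the column index $s$ with the simplicial degree and absorbing the shift in $Gr^1_n{\cal F}C_*=\Sigma^nC_n$ --- these are exactly the row-filtration spectral sequences of the bicomplexes $C(m)_{*,*}$ and $D(m)_{*,*}$ that appear in the proof of the preceding Lemma. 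The map $\phi_{*,\bullet}$ induces a map of $E_1$-pages, namely the map of alternating-sum chain complexes $H(\Sigma^tC(m)_{*,s})\to H(\Sigma^tD(m)_{*,s})$; since $\phi_{*,s}$ is a quasi-isomorphism for each $s$, this is an isomorphism, hence so is the induced map of $E_2$-pages. This step is purely formal.

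Now suppose for contradiction that the $(m+1)^{st}$ first-order Toda bracket of $\phi_{*,\bullet}$ vanishes; in the notation of Theorem~\ref{main thm} this is the bracket $T(2,m-1;\phi_{*,\bullet})\colon \Sigma C(m)_{*,m+1}\to D(m)_{*,m-1}$ (its suspended domain sits in simplicial degree $m+1$, its target in simplicial degree $m-1$). By the argument in the proof of Theorem~\ref{thm:brackets-diff}, vanishing of this bracket is precisely the condition that guarantees the isomorphism of $E_2$-pages above commutes with the $d_2$-differential out of the column indexed by the simplicial degree $m+1$, i.e.\ with $d_2\colon E^2_{m+1,*}\to E^2_{m-1,*}$. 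But the proof of the preceding Lemma shows that this very differential behaves differently on the two sides. For $C(m)_{*,\bullet}$ the bicomplex splits as $C_{0,*}\oplus C_{*,*}/C_{0,*}$, so this $d_2$ vanishes and the canonical class $\iota_{1,m-1}$ survives to $E^\infty$, giving $H_m(Tot_*C(m)_{*,\bullet})\cong\mathbb C$; whereas for $D(m)_{*,\bullet}$ this $d_2$ is nonzero --- it is exactly the differential that must kill $\iota_{1,m-1}$, since $HC_m(\mathbb C[\Omega K(\mathbb Z,m)])=0$. Because the $E_2$-isomorphism identifies the relevant source groups and carries $\iota_{1,m-1}$ to $\iota_{1,m-1}$, commutativity with $d_2$ would equate a nonzero map with a zero map. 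This contradiction forces $T(2,m-1;\phi_{*,\bullet})\neq 0$.

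The main obstacle in writing this out in full is the bookkeeping required to pin down that the differential responsible for the discrepancy $H_m(Tot_*C(m)_{*,\bullet})\neq H_m(Tot_*D(m)_{*,\bullet})$ is precisely the $d_2$ controlled by the $(m+1)^{st}$ first-order Toda bracket, and not by some higher-order bracket: this means carefully matching the indexing of the geometric realization tower of Section~2 with that of the row-filtration bicomplex of Section~3, keeping track of the suspension shifts in $Gr^1_n{\cal F}C_*=\Sigma^nC_n$. The point that makes the conclusion as sharp as stated is exactly that the obstruction here lives on the $E_2$-page --- equivalently, is a \emph{first-order} Toda bracket --- so that no higher bracket can compensate for its non-vanishing, and the contradiction is already produced at the level of $E_2$.
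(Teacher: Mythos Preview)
Your argument is correct and is exactly the argument the paper intends: the paper's proof is the single sentence ``This is an immediate consequence of Theorem~\ref{thm:brackets-diff},'' and what you have written is a careful unpacking of that sentence, combining the spectral-sequence statement of Theorem~\ref{thm:brackets-diff} with the explicit $d_2$ computation in the preceding Lemma. Your identification of the $(m+1)^{\mathrm{st}}$ first-order bracket with $T(2,m-1;\phi_{*,\bullet})$ and of the relevant differential as the $d_2$ between simplicial degrees $m+1$ and $m-1$ is the right bookkeeping, and your concluding observation---that the discrepancy lives already at $E_2$, hence is governed by a first-order bracket and cannot be deferred to a higher one---is precisely what pins down the specific bracket named in the Corollary.
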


\begin{proof} This is an immediate consequence of Theorem \ref{thm:brackets-diff}.
\end{proof}

 Note that the above phenomenon is fundamentally a non-commutative one. In fact, revisiting the proof of Lemma \ref{lemma:one} we see that the canonical models for the centralizer subgroups are also defined when the simplicial group is degreewise abelian. However, in this case, the diagram in (\ref{diag:diag1}) commutes not just up to canonical homotopy, but on the nose. In other words, if $A_{\bullet}$ is a simplicial abelian group, there is a homomorphism of simplicial chain complexes (not just graded complexes)
\[
\{[n]\mapsto \wt{CC}_*(\mathbb C[A_n])\}_{n\ge 0}\to \{[n]\mapsto CC_*(\mathbb C[A_n])\}_{n\ge 0}
\]
(with the left-hand side defined exactly as above) which is a quasi-isomorphism in each degree, hence a quasi-isomorphism of total complexes. From this we can also conclude that the functor
\[
\Gamma\hskip-.03in_{\bullet}\mapsto \wt{CC}_*(\mathbb C[\Gamma\hskip-.03in_{\bullet}])
\]
defined for simplicial groups which are either degreewise free or degreewise abelian, does not admit an extension to a homotopy functor from the category $S_{\bullet}(gp.s)$ of simplicial groups to $S_{\bullet}{\cal C}$, for this last observation implies that the abelianization map $\Gamma(m)_{\bullet}\to A(m)_{\bullet}$, which is a weak equivalence, does not induce a quasi-isomorphism when precomposed with $\wt{CC}_*(_-)$.
\newpage

\end{document}